   \newtheoremstyle{example}{\topsep}{\topsep}%
     {}%         Body font
     {}%         Indent amount (empty = no indent, \parindent = para indent)
     {\bfseries}% Thm head font
     {.}%        Punctuation after thm head
     {  }%     Space after thm head (\newline = linebreak)
     {}%         Thm head spec
\theoremstyle{plain}
\newtheorem{thm}{Theorem}
\newtheorem*{thm*}{Theorem}
\newtheorem{lem}{Lemma}
\newtheorem{prop}{Proposition}
\theoremstyle{definition}
\newtheorem*{defn}{Definition}
\theoremstyle{remark}
\newcommand{\N}{\mathbb{N}}
\renewcommand{\P}{\mathbb{P}}
\renewcommand{\L}{\mathcal{L}}
\newcommand{\R}{\mathbb{R}}
\newcommand{\F}{\mathcal{F}}
\newcommand{\B}{\mathcal{B}}
\renewcommand{\H}{\mathcal{H}}
    \def\independenT#1#2{\mathrel{\setbox0\hbox{$#1#2$}
    \copy0\kern-\wd0\mkern4mu\box0}}
\title{Large deviations for Hilbert space valued Wiener processes: a sequence space approach}
\author{Andreas Andresen, Peter Imkeller, Nicolas Perkowski\\ Institut f\"ur Mathematik\\ Humboldt-Universit\"at zu Berlin\\
Rudower Chaussee 25\\ 12489 Berlin\\ Germany}
\begin{document}

\maketitle
\begin{center}
   \textit{Dedicated to David Nualart on the Occasion of his 60th Birthday}
\end{center}

\begin{abstract}
Ciesielski's isomorphism between the space of $\alpha$-H\"older continuous
functions and the space of bounded sequences is used to give an alternative
proof of the large deviation principle for Wiener processes with values in  Hilbert space.
\end{abstract}

\noindent \emph{Mathematical Subjects Classification 2010:} 60F10; 60G15.

\noindent \emph{Key words:} large deviations; Schilder's theorem; Hilbert space valued Wiener process; Ciesielski's isomorphism.

\section*{Introduction}

The large deviation principle (LDP) for Brownian motion $\beta$ on $[0,1]$
- contained in Schilder's theorem (\cite{Schilder}) - describes the exponential decay of the
probabilities with which $\sqrt{\varepsilon} \beta$ takes values in closed
or open subsets of the path space of continuous functions in which the
trajectories of $\beta$ live. The path space is equipped with the topology
generated by the uniform norm. The decay is dominated by a rate function
capturing the 'energy' $\frac{1}{2} \int_0^1 (\dot{f}(t))^2 dt$ of
functions $f$ on the Cameron-Martin space for which a square integrable
derivative exists. Schilder's theorem is of central importance to the
theory of large deviations for randomly perturbed dynamical systems or
diffusions taking their values in spaces of continuous functions (see
\cite{Freidlin}, \cite{Dembo}, and references therein, \cite{Vares}).  A
version of Schilder's theorem for a $Q$-Wiener processes $W$ taking values
in a separable Hilbert space $H$ is well known (see \cite{DaPrato}, 
Theorem 12.7 gives an LDP for Gaussian laws on Banach spaces). Here
$Q$ is a self adjoint positive trace class operator on $H$. If
$(\lambda_i)_{i\ge 0}$ are its summable eigenvalues with respect to an
eigenbasis $(e_k)_{k\ge 0}$ in $H$, $W$ may be represented with respect to
a sequence of one dimensional Wiener processes $(\beta_k)_{k\ge 0}$ by $W =
\sum_{k=0}^\infty \lambda_k \beta_k\, e_k$. The LDP in this framework can
be derived by means of techniques of reproducing kernel Hilbert spaces (see
\cite{DaPrato}, Chapter 12.1). The rate 
function is then given by an analogous energy functional for which $\dot{f}^2$
is replaced by $\lVert Q^{-\frac{1}{2}}\dot{F}\rVert^2$ for continuous functions
$F$ possessing square integrable derivatives $\dot{F}$ on $[0,1]$.

Schilder's theorem for $\beta$ may for instance be derived via
approximation of $\beta$ by random walks from LDP principles for discrete
processes (see \cite{Dembo}). \cite{Baldi} give a very elegant alternative
proof of Schilder's theorem, the starting point of which is a Fourier
decomposition of $\beta$ by a complete orthonormal system (CONS) in
$L^2([0,1])$. The rate function for $\beta$ is then simply calculated by
the rate functions of one-dimensional Gaussian unit variables. In this
approach, the LDP is first proved for balls of the topology, and then
generalized by means of exponential tightness to open and closed sets of
the topology. As a special feature of the approach, Schilder's theorem is
obtained in a stricter sense on all spaces of H\"older continuous functions
of order $\alpha<\frac{1}{2}$. This enhancement results quite naturally
from a characterization of the H\"older topologies on function spaces by
appropriate infinite sequence spaces (see \cite{Ciesielski}). Representing
the one-dimensional Brownian motions $\beta_k$ for instance by the CONS of
Haar functions on $[0,1]$, we obtain a description of the Hilbert space
valued Wiener process $W$ in which a double sequence of independent
standard normal variables describes randomness. Starting with this
observation, in this paper we extend the direct proof of Schilder's theorem
by \cite{Baldi} to $Q$-Wiener processes $W$ with values on
$H$. On the way, we also retrieve the enhancement of the LDP to spaces of
H\"older continuous functions on $[0,1]$ of order $\alpha<\frac{1}{2}$. The
idea of approaching problems related to stochastic processes with values in
function spaces by sequence space methods via Ciesielski's isomorphism is
not new: it has been employed in \cite{BenarousGradinaru} to give an
alternative treatment of the support theorem for Brownian motion, in
\cite{BenarousLedoux} to enhance the Freidlin-Wentzell theory from the
uniform to H\"older norms, and in \cite{Eddahbi} and \cite{EddahbiOuknine}
further to Besov-Orlicz spaces.

In Section \ref{preliminaries} we first give a generalization of
Ciesielski's isomorphism of spaces of H\"older continuous functions and
sequence spaces to functions with values on Hilbert spaces. We briefly
recall the basic notions of Gaussian measures and Wiener processes on
Hilbert spaces. Using Ciesielski's isomorphism we give a Schauder
representation of Wiener processes with values in $H$. Additionally we give
a short overview of concepts and results from the theory of LDP needed in
the derivation of Schilder's theorem for $W$. In the main Section \ref{LDP}
the alternative proof of the LDP for $W$ is given. We first introduce a new
norm on the space of H\"older continuous functions $C_\alpha([0,1],H)$ with
values in $H$ which is motivated by the sequence space representation in
Ciesielski's isomorphism, and generates a coarser topology. We adapt the
description of the rate function to the Schauder series setting, and then
prove the LDP for a basis of the coarser topology using Ciesielski's
isomorphism. We finally establish the last ingredient, the crucial property
of exponential tightness, by construction of appropriate compact sets in
sequence space.

\section{Preliminaries}\label{preliminaries}

In this section we collect some ingredients needed for the proof of
a large deviations principle for Hilbert space valued Wiener processes.
We first prove Ciesielski's theorem for Hilbert
space valued functions which translates properties of functions into properties of
the sequences of their Fourier coefficients with respect to complete orthonormal systems
in $L^2([0,1])$. We summarize some basic properties of Wiener
processes $W$ with values in a separable Hilbert space $H$. We then discuss
Fourier decompositions of $W$, prove that its trajectories
lie almost surely in $C_\alpha^0([0,1],H)$ and describe its
image under the Ciesielski isomorphism. We will always denote by $H$ a
separable Hilbert space equipped with a symmetric inner product
$\langle\cdot, \cdot\rangle$ that induces the norm $\lVert \cdot\rVert_H$ and a
countable complete orthonormal system (CONS) $(e_k)$ $k\in \mathbb{N}$.

\subsection{Ciesielski's isomorphism}

The \textbf{Haar functions} $(\chi_n, n \ge 0)$ are defined as $\chi_0 \equiv 1$,
\begin{align} \label{eq:haar functions def}
   \chi_{2^k+l}(t) :=  \begin{cases}
                                     \sqrt{2^k}, & \frac{2l}{2^{k+1}} \le t < \frac{2l+1}{2^{k+1}},\\
                                     -\sqrt{2^k}, & \frac{2l+1}{2^{k+1}} \le t \le \frac{2l+2}{2^{k+1}}, \\
                                     0, & \text{otherwise.}
                                  \end{cases}
\end{align}
The Haar functions form a CONS of $L^2([0,1],dx)$. Note that because of their wavelet structure, the integral $\int_{[0,1]} \chi_n
df$ is well-defined for all functions $f$. For $n=2^k+l$ where $k\in \N$
and $0\le l\le 2^k-1$ we have $\int_{[0,1]} \chi_n
dF=\sqrt{2^k}[2F(\frac{2l+1}{2^{k+1}})-F(\frac{2l+2}{2^{k+1}})-
F(\frac{2l}{2^{k+1}})]$, and it does not matter whether $F$ is a real or
Hilbert space valued function.

The primitives of the Haar functions are called \textbf{Schauder functions}, and they are given by
   \begin{align*}
      \phi_n (t) = \int_0^t \chi_n(s) ds\text{, }t\in[0,1],\text{ }n\ge 0.
   \end{align*}

Slightly abusing notation, we denote the $\alpha$-H\"older seminorms on
$C_\alpha([0,1];H)$ and on $C_\alpha([0,1];\R)$ by the same symbols
\begin{align*}
   \lVert F\rVert_\alpha &:= \sup_{0 \le s < t \le 1} \frac{ \lVert F(t) - F(s)\rVert_H}{ |t-s|^\alpha}, \quad F \in C_\alpha([0,1];H), \\
   \lVert f\rVert_\alpha &:= \sup_{0 \le s < t \le 1} \frac{ |f(t) - f(s)|}{ |t-s|^\alpha}, \quad f \in C_\alpha([0,1];\R)
\end{align*}
$C_\alpha([0,1];H)$ is of course the space of all functions $F: [0,1] \rightarrow H$ such that $\lVert F\rVert_\alpha < \infty$, and similarly for $C_\alpha([0,1];\R)$. We also denote the supremum norm on $C([0,1]; H)$ and $C([0,1]; \R)$ by the same symbol $\lVert \cdot \rVert_\infty$.

Denote in the sequel for an $H$-valued
function $F$ its orthogonal component with respect to $e_k$ by $F_k =
\langle F, e_k\rangle, k\ge 0.$ Further denote by $P_k$ (resp. $R_k$) the
orthogonal projectors on $\mbox{span}(e_1,\cdots, e_k)$ (resp. its
orthogonal complement), $k\ge 0.$ For every $F \in C_\alpha([0,1];H)$, every
$k\ge 0, s,t\in[0,1]$ we have
$$
 | \langle F(t), e_k \rangle - \langle F(s), e_k \rangle |
    \le \lVert F(t) - F(s)\rVert_H
$$
More generally, for any $k\ge 0, s,t\in[0,1]$ we have
$$ \lVert P_k F(t) - P_k F(s)\rVert_H
    \le
 \lVert F(t) - F(s)\rVert_H, \quad \lVert R_k F(t) - R_k F(s)\rVert_H
    \le
    \lVert F(t) - F(s)\rVert_H.
$$

Our approach starts with the observation that we may decompose functions
$F\in C_\alpha ([0,1];H)$ by double series with respect to the system
$(\phi_n\,e_k: n,k\ge 0)$.

\begin{lem}\label{lem:hoelderschauder}
 Let $\alpha\in (0,1)$ and $F\in C_\alpha([0,1];H)$. Then we have
\begin{align*}
 F=\sum_n \int_{[0,1]} \chi_n dF \phi_n = \sum_{n=0}^\infty \sum_{k=0}^\infty \int_{[0,1]} \chi_n dF_k e_k\phi_n
\end{align*}
with convergence in the uniform norm on $C([0,1];H)$.
\end{lem}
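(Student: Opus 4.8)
The plan is to run the classical Faber--Schauder argument directly in $H$. Write $c_n := \int_{[0,1]}\chi_n\,dF \in H$. Since for each fixed $n$ this integral is a finite linear combination of values of $F$, the bounded functionals $\langle\,\cdot\,,e_k\rangle$ pass through it, so $\langle c_n,e_k\rangle = \int_{[0,1]}\chi_n\,dF_k$ and therefore $c_n = \sum_k\big(\int_{[0,1]}\chi_n\,dF_k\big)e_k$ is nothing but the CONS expansion of the vector $c_n$ in $H$; in particular the second series in the statement coincides, term by term in $n$, with the first. Hence everything reduces to showing $\sum_{n\ge0}c_n\phi_n = F$ with uniform convergence. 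Note that every $\phi_n$ vanishes at $0$, so this forces $F(0)=0$; for a general $F\in C_\alpha([0,1];H)$ the identity reads $\sum_n c_n\phi_n = F-F(0)$, and we assume $F(0)=0$ from now on (the case relevant for Wiener processes).

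The heart of the matter is an exact description of the truncations grouped by generation. For $M\ge 0$ put $\Pi_M F := \sum_{n=0}^{2^M-1}c_n\phi_n$, which collects $\phi_0$ together with the Schauder functions of all generations $k=0,\dots,M-1$. I claim that $\Pi_M F$ is the piecewise linear function interpolating $F$ at the dyadic points $j2^{-M}$, $0\le j\le 2^M$. This follows by induction on $M$ from the formula $c_{2^k+l} = \sqrt{2^k}\big[2F(\tfrac{2l+1}{2^{k+1}}) - F(\tfrac{2l+2}{2^{k+1}}) - F(\tfrac{2l}{2^{k+1}})\big]$ recorded above (valid for $H$-valued $F$), together with two elementary facts about Schauder functions: $\phi_{2^k+l}$ vanishes at every dyadic rational of generation $\le k$, and it attains its maximum value $2^{-k/2-1}$ at the midpoint $\tfrac{2l+1}{2^{k+1}}$. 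Thus passing from generation $M$ to $M+1$ leaves all values at already-matched dyadic points unchanged, while at each new midpoint the added term contributes $c_{2^M+l}\,2^{-M/2-1} = F(\tfrac{2l+1}{2^{M+1}}) - \tfrac12\big(F(\tfrac{2l}{2^{M+1}}) + F(\tfrac{2l+2}{2^{M+1}})\big)$, precisely the correction that replaces the interpolated value there by $F(\tfrac{2l+1}{2^{M+1}})$; the base case $M=0$ is the computation $\Pi_0 F = F(1)\,\phi_0$ (using $F(0)=0$).

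Given this, convergence is obtained from two standard estimates, both using only $F\in C_\alpha$. Since on each interval $[\,j2^{-M},(j+1)2^{-M}\,]$ the function $\Pi_M F$ is a convex combination of $F(j2^{-M})$ and $F((j+1)2^{-M})$, one gets $\lVert\Pi_M F - F\rVert_\infty \le \sup_{|s-t|\le 2^{-M}}\lVert F(t)-F(s)\rVert_H \le \lVert F\rVert_\alpha\,2^{-M\alpha}$. To handle partial sums that stop in the middle of a generation, observe that for fixed $k$ the supports of the $\chi_{2^k+l}$, hence of the $\phi_{2^k+l}$, are pairwise disjoint, while the coefficient formula and the H\"older bound give $\lVert c_{2^k+l}\rVert_H \le 2\sqrt{2^k}\,\lVert F\rVert_\alpha\,2^{-(k+1)\alpha}$; combined with $\lVert\phi_{2^k+l}\rVert_\infty = 2^{-k/2-1}$, any block $\sum_{l\in A}c_{2^k+l}\phi_{2^k+l}$ has uniform norm at most $\lVert F\rVert_\alpha\,2^{-(k+1)\alpha}$. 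Therefore, for $2^M\le N<2^{M+1}$ one has $\lVert\sum_{n=0}^{N}c_n\phi_n - F\rVert_\infty \le \lVert F\rVert_\alpha\big(2^{-M\alpha} + 2^{-(M+1)\alpha}\big) \to 0$, which is the claim.

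There is no genuine obstacle here --- this is the Ciesielski/Faber--Schauder computation --- so the only care needed is the bookkeeping: organizing the single index $n$ into generations $k$ so that level truncations are honest interpolants, and making sure convergence is established for \emph{all} partial sums rather than just along $N=2^M$, which is exactly what the disjoint-support estimate within a generation takes care of. The passage from $\R$- to $H$-valued $F$ is cosmetic: the coefficient formula holds verbatim, and $\lVert\,\cdot\,\rVert_H$ replaces $|\,\cdot\,|$ throughout.
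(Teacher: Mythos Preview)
Your argument is correct. You run the Faber--Schauder interpolation identification directly in $H$: the level-$M$ partial sum $\Pi_M F$ is the piecewise linear interpolant of $F$ at the dyadic points $j2^{-M}$, whence the uniform error is controlled by the H\"older modulus, and the disjoint-support estimate within a generation upgrades convergence along $N=2^M$ to convergence of all partial sums. The observation that $\langle c_n,e_k\rangle=\int\chi_n\,dF_k$ reduces the double series to the CONS expansion of each coefficient $c_n$, so the second identity is immediate once the first is known.

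The paper takes a genuinely different route: it quotes the scalar Ciesielski identity for each component $F_k=\langle F,e_k\rangle$ and then justifies the interchange $\sum_k\sum_n=\sum_n\sum_k$ by bounding the joint tail $\big\lVert\sum_{n\ge N}\phi_n\int\chi_n\,dR_mF\big\rVert_\infty$ in terms of $\lVert R_mF\rVert_\alpha$, where $R_m$ is the projection onto $\mathrm{span}(e_0,\dots,e_m)^\perp$. That argument leans on the known real-valued result and dominated convergence for $\lVert R_mF\rVert_\alpha\to 0$; yours is self-contained, needs no citation of the scalar case, and makes the convergence mechanism (interpolation) explicit. Conversely, the paper's tail estimate in the variables $(N,m)$ is a bit more than what the lemma strictly requires, and it dovetails with the projection machinery used elsewhere in the paper. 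Your remark that the identity as stated forces $F(0)=0$ (otherwise one gets $F-F(0)$) is a fair point that applies equally to the paper's proof via the scalar case.
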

\begin{proof}
For the real valued functions $F_k, k\ge 0,$ the representation
$$F_k = \sum_{n=0}^\infty \int_{[0,1]} \chi_n d F_k\, \phi_n$$
is well known from \citet{Ciesielski}. Therefore we may write for $F\in
C_\alpha([0,1];H)$
\begin{align*}
F&=\sum_{k=0}^\infty F_ke_k\\
&=\sum_{k=0}^\infty e_k\sum_{n=0}^\infty \int_{[0,1]} \chi_n dF_k \phi_n \\
&=\sum_{n=0}^\infty \sum_{k=0}^\infty \int_{[0,1]} \chi_n dF_k e_k\phi_n \\
&=\sum_{n=0}^\infty \int_{[0,1]} \chi_n dF \phi_n.
\end{align*}
To justify the exchange in the order of summation and the convergence in
the uniform norm, we have to show
$$\lim_{N,m\to \infty} \left\rVert\sum_{n\ge N} \int_{[0,1]} \chi_n d R_m F
\phi_n\right\rVert_\infty = 0.$$ For this purpose, note first that by definition of
the Haar system for any $n,m\ge 0, n=2^k + l$, where $0\le l\le 2^k-1$
\begin{align*}
 \left\lVert \int_{[0,1]} \chi_n dR_m F\right\rVert_H&=\sqrt{2^k}\left\lVert 2R_m F\left(\frac{2l+1}{2^{k+1}}\right)-R_m F\left(\frac{2l+2}{2^{k+1}}\right)-
 R_m F\left(\frac{2l}{2^{k+1}}\right)\right\rVert_H\\
&\leq 2\lVert R_m F\rVert_\alpha 2^{-\alpha(k+1)}2^{\frac{1}{2}k}\\
&=\lVert R_m F\rVert_\alpha 2^{-\alpha(k+1)+\frac{1}{2}k+1}.
\end{align*}
Therefore and for $K\ge 0$ such that $2^K\le N\le 2^{K+1}$, using the fact
that $\phi_{2^k+l}, 0\le l\le 2^k-1$ have disjoint support and that
$\lVert\phi_{2^k+l}\rVert_\infty \le 2^{-\frac{k}{2}-1}$, we obtain
\begin{align*}
 \left\lVert \sum_{n\geq N} \int_{[0,1]}\chi_nd R_m F\phi_n\right\rVert_\infty&\leq \sum_{k\geq K}\left\lVert \sum_{0\leq l\leq 2^k-1} \int_{[0,1]}\chi_{2^k+l}dF\phi_{2^k+l}\right\rVert_\infty\\
&\leq \sum_{k\geq K} \sup_{0\leq l\leq 2^k-1} \left\lVert \int_{[0,1]} \chi_{2^k+l}d R_m F \right\rVert_\infty2^{-\frac{k}{2}-1}\\
&\leq \sum_{k\geq K} \lVert R_m F \rVert_\alpha 2^{-\alpha (k+1)}\\
&\le\lVert R_m F \rVert_\alpha \sum_{k\geq K}
(2^\alpha)^{-k}\xrightarrow[K,m\rightarrow \infty]{} 0.
\end{align*}
Here we use $\lVert R_m F\rVert_\alpha \le \lVert F\rVert_\alpha<\infty$ for all $m\ge 0$,
the fact that $\lim_{m\to\infty} R_m F(t)=0$ for any $t\in[0,1]$, and
dominated convergence to obtain $\lim_{m\to\infty} \lVert R_m F\rVert_\alpha = 0$.
\end{proof}
A closer inspection of the coefficients in the decomposition of Lemma
\ref{lem:hoelderschauder} leads us to the following isomorphism, described
by \citet{Ciesielski} in the 1-dimensional case. To formulate it, denote by
$\mathcal{C}_0^H$ the space of $H$-valued sequences $(\eta_n)_{n \in \N}$ such that $\lim_{n\rightarrow \infty}\lVert\eta_n\rVert_H=0$. If we equip $\mathcal{C}_0^H$ with the supremum norm (using again the symbol $\lVert\cdot \rVert_\infty$), it becomes a Banach space.

\begin{thm}[Ciesielski's isomorphism for Hilbert spaces]\label{CiesilskiH}
   Let $0 <  \alpha <  1$. Let $(\chi_n)$ denote the Haar functions, and $(\phi_n)$ denote the Schauder functions. Let for $0\le n=2^k+l\ge0$, where $0\le l\le 2^k-1$
   \begin{align*}
      c_0(\alpha) := 1, \quad c_n(\alpha):=2^{k(\alpha-1/2) + \alpha - 1}.
   \end{align*}
   Define
   \begin{align*}
       T^H_\alpha: C_\alpha^0([0,1];H) \rightarrow \mathcal{C}_0^H \qquad F \mapsto \left(c_n(\alpha) \int_{[0,1]} \chi_n dF\right)_{n \in \N}
   \end{align*}
   Then $T^H_\alpha$ is continuous and bijective,
   its operator norm is 1, and its inverse is given by
   \begin{align*}
       (T^H_\alpha)^{-1}: \mathcal{C}_0^H \rightarrow C_\alpha^0([0,1];H), \quad (\eta_n) \mapsto \sum_{n=0}^\infty \frac{\eta_n}{c_n(\alpha)} \phi_n,
   \end{align*}
   The norm of $(T^H_\alpha)^{-1}$ is bounded by
   \begin{align*}
       \left\lVert(T^H_\alpha)^{-1}\right\rVert \le \frac{2}{(2^\alpha-1)(2^{1-\alpha}-1)}.
   \end{align*}
 \end{thm}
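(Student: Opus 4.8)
The plan is to reduce everything to the one‑dimensional Ciesielski isomorphism applied coordinate‑wise, using the fact that the Haar/Schauder expansion interacts componentwise with the CONS $(e_k)$ of $H$. First I would recall from \citet{Ciesielski} that the real‑valued operator $T_\alpha : C_\alpha^0([0,1];\R) \to \mathcal{C}_0$, $f \mapsto (c_n(\alpha)\int_{[0,1]}\chi_n\,df)_n$, is an isometric‑up‑to‑the‑stated‑constants isomorphism, with operator norm $1$ and $\lVert T_\alpha^{-1}\rVert \le 2/((2^\alpha-1)(2^{1-\alpha}-1))$. For $F \in C_\alpha^0([0,1];H)$, writing $F = \sum_k F_k e_k$ with $F_k = \langle F,e_k\rangle$, I would observe that $\int_{[0,1]}\chi_n\,dF = \sum_k (\int_{[0,1]}\chi_n\,dF_k)\,e_k$, so that the $n$‑th coordinate of $T_\alpha^H F$ has $\lVert c_n(\alpha)\int_{[0,1]}\chi_n\,dF\rVert_H^2 = \sum_k (c_n(\alpha)\int_{[0,1]}\chi_n\,dF_k)^2$, i.e. the sequence $T_\alpha^H F$ is obtained by ``stacking'' the real sequences $T_\alpha F_k$ into an $H$‑valued sequence.

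Next I would establish the three claims in turn. \emph{Well‑definedness and norm $\le 1$:} the estimate in the proof of Lemma~\ref{lem:hoelderschauder}, namely $\lVert\int_{[0,1]}\chi_{2^k+l}\,dF\rVert_H \le \lVert F\rVert_\alpha\, 2^{-\alpha(k+1)+k/2+1}$, gives directly $\lVert c_n(\alpha)\int_{[0,1]}\chi_n\,dF\rVert_H \le \lVert F\rVert_\alpha$, which simultaneously shows $T_\alpha^H F \in \ell^\infty(H)$ with $\lVert T_\alpha^H F\rVert_\infty \le \lVert F\rVert_\alpha$ (hence operator norm $\le 1$) and, by the same telescoping‑over‑$k$ argument used for $R_m F$, that $\lVert c_n(\alpha)\int_{[0,1]}\chi_n\,dF\rVert_H \to 0$, so $T_\alpha^H F \in \mathcal{C}_0^H$. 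That the norm equals exactly $1$ follows by testing on a single Schauder function $\phi_{n_0}\cdot e_0$ (a real example scaled into one coordinate), for which the real theory already gives ratio $1$. \emph{The inverse:} for $(\eta_n)\in\mathcal{C}_0^H$ I would show $S(\eta) := \sum_n \eta_n/c_n(\alpha)\,\phi_n$ converges uniformly in $C([0,1];H)$ and lies in $C_\alpha^0$ with $\lVert S(\eta)\rVert_\alpha \le \frac{2}{(2^\alpha-1)(2^{1-\alpha}-1)}\,\lVert\eta\rVert_\infty$; again this is the $H$‑valued version of Ciesielski's inverse estimate, proved by the standard splitting of an increment $S(\eta)(t)-S(\eta)(s)$ into the contributions of the levels $k$ with $2^{-k}\gtrsim |t-s|$ and those with $2^{-k}\lesssim |t-s|$, bounding $\lVert\phi_n\rVert_\infty \le 2^{-k/2-1}$ and $\lVert\phi_n\rVert_\alpha \le 2^{k(1/2-\alpha)}$ in each regime and summing the two geometric series — the $H$‑norm passes through because at most one $l$ at each level $k$ contributes to a given interval $[s,t]$, exactly as in the scalar case. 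Finally, $S \circ T_\alpha^H = \mathrm{id}$ is precisely the content of Lemma~\ref{lem:hoelderschauder} (the Schauder reconstruction of $F$), and $T_\alpha^H \circ S = \mathrm{id}$ follows since $\int_{[0,1]}\chi_m\,d\phi_n = \1_{\{m=n\}}$ together with uniform convergence permitting term‑by‑term integration against $\chi_m$; bijectivity of $T_\alpha^H$ is then immediate.

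I expect the only genuine subtlety — as opposed to bookkeeping — to be the interchange of the sum over the Schauder index $n$ with the (implicit) sum over the CONS index $k$ when identifying $S\circ T_\alpha^H = \mathrm{id}$ and when checking $S(\eta)\in\mathcal{C}^0_\alpha$; this is exactly where Lemma~\ref{lem:hoelderschauder} is invoked, and its proof already contains the required double‑summation estimate (the $\lim_{N,m\to\infty}$ statement), so the obstacle is already dispatched there. Everything else is a faithful transcription of the one‑dimensional arguments of \citet{Ciesielski} with $|\cdot|$ replaced by $\lVert\cdot\rVert_H$, using only that at each dyadic level the Schauder functions have disjoint support so that no Cauchy–Schwarz loss across $k$ is incurred. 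I would therefore present the proof as: (i) the key per‑coefficient bound $\Rightarrow$ continuity, norm $\le 1$, and range in $\mathcal{C}_0^H$; (ii) the inverse estimate $\Rightarrow$ $S$ maps into $C_\alpha^0$ with the stated norm bound; (iii) Lemma~\ref{lem:hoelderschauder} and the biorthogonality $\int\chi_m\,d\phi_n=\delta_{mn}$ $\Rightarrow$ $S=(T_\alpha^H)^{-1}$; (iv) a one‑coordinate example $\Rightarrow$ the operator norm is exactly $1$.
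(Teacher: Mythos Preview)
Your proposal is correct and mirrors the paper's proof almost step for step: the per-coefficient H\"older estimate gives continuity and norm $\le 1$, a one-coordinate test function (the paper takes $F(s)=se_1$, i.e.\ your case $n_0=0$) gives norm $=1$, the biorthogonality $\int\chi_m\,d\phi_n=\delta_{mn}$ identifies the inverse, and the scalar Ciesielski bound is transferred verbatim to $\lVert(T_\alpha^H)^{-1}\rVert$ by factoring out $\lVert(\eta_n)\rVert_\infty$. One small correction: the range-in-$\mathcal{C}_0^H$ claim is obtained in the paper not via the $R_mF$ telescoping from Lemma~\ref{lem:hoelderschauder} but directly from the sharper per-coefficient inequality $\lVert c_n(\alpha)\int\chi_n\,dF\rVert_H\le\sup_{|t-s|\le 2^{-(k+1)}}\lVert F(t)-F(s)\rVert_H/|t-s|^\alpha$, which tends to $0$ precisely by the definition of $C_\alpha^0$.
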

\begin{proof}
Observe that for $n\in \N$ with $n=2^{k}+l$, $0\le l\le 2^{k}-1$
\begin{align*}
 &\left\lVert \int_{[0,1]} \chi_n dF\right\rVert_H\\
&=\sqrt{2^{k}}\left\lVert 2F\left(\frac{2l+1}{2^{k+1}}\right)-F\left(\frac{2l+2}{2^{k+1}}\right)-F\left(\frac{2l}{2^{k+1}}\right)\right\rVert_H\\
&\leq \frac{1}{2c_\alpha(n)}\left(\frac{\left\lVert F(\frac{2l+2}{2^{k+1}})-F(\frac{2l+1}{2^{k+1}})
\right\rVert_H}{2^{-\alpha(k+1)}}+\frac{\left\lVert F(\frac{2l+1}{2^{k+1}})-F(\frac{2l}{2^{k+1}})
\right\rVert_H}{2^{-\alpha(k+1)}}\right)\\
&\le  \frac{1}{c_\alpha(n)}
\sup_{t,s\in[0,1],\,\,|t-s|\le2^{-k-1}}\frac{\lVert
F(t)-F(s)\rVert_H}{|t-s|^\alpha}\\ &\leq \frac{1}{c_\alpha(n)}\lVert
F\rVert_\alpha.
\end{align*}
This gives the desired bound on the norm. Moreover, since $F\in
C_\alpha^0([0,1],H)$ we have
\begin{align*}
 \lim_{n\rightarrow \infty} c_\alpha(n)\left\lVert \int_{[0,1]} \chi_n dF\right\rVert_H\le\lim_{n\rightarrow \infty}
 \sup_{t,s\in[0,1],\,\,|t-s|\le2^{-k-1}}\frac{\lVert F(t)-F(s)\rVert_H}{|t-s|^\alpha}=0.
\end{align*}
Thus the range of $T^H_\alpha$ is indeed contained in $\mathcal{C}_0^H$.
Taking $F:[0,1]\rightarrow H$ with $F(s)=se_1$ for $s\in[0,1]$ we find that
$T^H_\alpha(F)=(e_1,0,0,...)$, thus $ \lVert F\rVert_\alpha= \lVert
T^H_\alpha (F)\rVert_\infty$. Therefore $\lVert T^H_\alpha \rVert=1$.
Clearly $T^H_\alpha$ is injective.

To see that $T^H_\alpha$ is bijective and that the inverse is bounded as claimed,
define
\begin{align*}
 A: \mathcal{C}_0^H \rightarrow C_\alpha^0([0,1];H), \quad (\eta_n) \mapsto \sum_{n=0}^\infty
 \frac{\eta_n}{c_n(\alpha)} \phi_n.
\end{align*}
Now a straightforward calculation using the orthogonality of the
$(\chi_n)_{n\ge 0}$ gives for any $(\eta_n)_{n\ge 0}\subset \mathcal{C}^H_0$
\begin{align*}
 T^H_\alpha\circ A((\eta_n)_{n\ge 0})&=T^H_\alpha\left(\sum_{n=0}^\infty \frac{\eta_n}{c_n(\alpha)}\phi_n\right)\\
&=\left(\sum_{n,m=0}^\infty\eta_n\int_{[0,1]}\chi_md\phi_n\right)_{m\in\N}\\
&=\left(\sum_{n,m=0}^\infty \eta_n\int\chi_n(t)\chi_m(t)dt\right)_{m\in\N}\\
&=(\eta_m)_{m\ge 0}.
\end{align*}
Consequently we can infer that $A=(T^H_\alpha)^{-1}$.\\

We still have to show that $(T^H_\alpha)^{-1}$ satisfies the claimed norm
inequality and maps every sequence $(\eta_n)_{n\ge 0}\in \mathcal{C}_0^H$
to an element of $C^0_\alpha([0,1],H)$. For this purpose let
$(\eta_n)_{n\ge 0}\in \mathcal{C}_0^H$, set
$F=(T^H_{\alpha})^{-1}((\eta_n))$ and let $s,t\in[0,1]$ be given. Then we have
\begin{align*}
 \lVert F(t)-F(s)\rVert_H\leq \lVert (\eta_n)_{n\ge 0}\rVert_{\infty}\left(\lvert t-s\rvert +\sum_{k=0}^{\infty}\sum_{l=0}^{2^k-1}\frac{\lvert \phi_{2^k+l}(t)-\phi_{2^k+l}(s)\rvert}{c_{2^k}(\alpha)}\right).\\
\end{align*}
The term in brackets on the right hand side is exactly the one appearing in
the real valued case (\citet{Ciesielski}). Consequently we have the same
bound, given by
\begin{align*}
 \lVert (T^H_{\alpha})^{-1}\rVert\leq \frac{1}{(2^\alpha-1)(2^{\alpha-1}-1)}.
\end{align*}
A more careful estimation yields
\begin{align*}
 \lVert F(t)-F(s)\rVert_H\leq \lVert \eta_0\rVert \lvert t-s \rvert + \sum_{k=0}^{\infty}\sum_{l=0}^{2^k-1}\frac{1}{c_{2^k}(\alpha)}\lVert \eta_{2^k+l}\rVert \lvert \phi_{2^k+l}(t)-\phi_{2^k+l}(s)\rvert.\\
\end{align*}
This is the same expression as in the real valued case. Its well known
treatment implies
\begin{align*}
 \lim_{\lvert t-s \rvert\rightarrow 0}\frac{\lVert F(t)-F(s)\rVert_H}{\lvert t-s\rvert^\alpha }=0.
\end{align*}
This finishes the proof.
\end{proof}

\subsection{Wiener processes on Hilbert spaces}
We recall some basic concepts of Gaussian random variables and Wiener processes with values in
a separable Hilbert space $H$. Especially we will derive a Fourier sequence decomposition of Wiener processes.
Our presentation follows \citet{DaPrato}.

\begin{defn}Let $(\Omega, \F, \mathbb{P})$ be a probability space, $m\in H$ and $Q:H\rightarrow H$
a positive self adjoint operator. An $H$-valued random variable $X$ such that for every $h\in H$
\begin{align*}
   E[\exp(i\langle h, X\rangle)]=\exp\left(i\langle h, m\rangle-\frac{1}{2}\langle Qh,h\rangle\right).
\end{align*}
is called Gaussian with covariance operator $Q$ and mean $m\in H$.
We denote the law of $X$ by $\mathcal{N}(m, Q)$.
\end{defn}
By Proposition 2.15 of \citet{DaPrato}, $Q$ has to be a positive, self-adjoint trace class
operator, i.e. a bounded operator from $H$ to $H$ that satisfies
\begin{enumerate}
   \item $\langle  Qx,x \rangle \ge 0 \text{ for every } x \in H$,
   \item $\langle  Qx,x \rangle = \langle  x, Qx\rangle \text{ for every } x \in H$,
   \item $\sum_{k=0}^\infty \langle  Q e_k, e_k \rangle <  \infty$ for every CONS $(e_k)_{k\ge 0}$.
\end{enumerate}
If $Q$ is a positive, self-adjoint trace class operator on $H$, then there exists a CONS $(e_k)_{k\ge 0}$ such that $Q e_k = \lambda_k
e_k$, where $\lambda_k \ge 0$ for all $k$ and $\sum_{k=0}^\infty \lambda_k <
\infty$. Note that for such a $Q$, an operator $Q^{1/2}$ can be defined by
setting $Q^{1/2} e_k := \sqrt{\lambda_k} e_k, k \in \N_0$. Then $Q^{1/2}
Q^{1/2} = Q$.

\begin{defn}Let $Q$ be a positive, self-adjoint trace class operator on $H$. A $Q$-Wiener process $(W(t): t \in [0,1])$ is a stochastic process with values in $H$ such that
\begin{enumerate}
   \item $W(0) = 0$,
   \item $W$ has continuous trajectories,
   \item $W$ has independent increments,
   \item $\L(W(t) - W(s)) = \mathcal{N}(0, (t-s) Q)$
\end{enumerate}
\end{defn}
In this case $(W(t_1), \dots , W(t_n))$ is $H^n$-valued Gaussian for all
$t_1, \dots, t_n \in [0,1]$. By Proposition 4.2 of \citet{DaPrato} we know
that such a process exists for every positive, self-adjoint trace class
operator $Q$ on H. To get the Fourier decomposition of a $Q$-Wiener process
along the Schauder basis we use a different standard characterization.
\begin{lem}\label{lem:characterisation of Wienerprocess}
 A stochastic process $Z$ on $(H, \B(H))$ is a $Q$-Wiener process if and only if
\begin{itemize}
 \item $Z_0=0$ $\mathbb{P}$-a.s.,
 \item $Z$ has continuous trajectories,
 \item $cov(\langle v, Z_t\rangle \langle w, Z_s\rangle)=(t\wedge s) \langle v,Qw \rangle$ $\forall v,w\in H$, $\forall 0\leq s\leq t<\infty$,
 \item $\forall (v_1,...,v_n)\in H^n$ $(\langle v_1,Z  \rangle,...,\langle v_n,Z\rangle)$
 is a $\R^n$-valued Gaussian process.
\end{itemize}
\end{lem}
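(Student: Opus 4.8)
The plan is to prove the two directions of the equivalence separately, showing first that a $Q$-Wiener process satisfies the four listed properties, and then that the four properties force the defining properties of a $Q$-Wiener process. The first direction is the easy one. If $W$ is a $Q$-Wiener process, then $W_0 = 0$ and continuity of trajectories hold by definition. For the Gaussianity of $(\langle v_1, W\rangle, \dots, \langle v_n, W\rangle)$, I would use the fact already recorded in the excerpt that $(W(t_1), \dots, W(t_n))$ is $H^n$-valued Gaussian for all choices of times, together with the observation that applying the continuous linear maps $w \mapsto \langle v_i, w\rangle$ turns a jointly Gaussian $H^n$-vector into a jointly Gaussian $\R^{n}$-vector; taking all finite families of times and all $v_i$ gives that the processes $\langle v_i, W\rangle$ are jointly Gaussian. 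For the covariance, I would write $W_t = W_s + (W_t - W_s)$ for $s \le t$, use independence of increments and $\L(W_t - W_s) = \mathcal{N}(0,(t-s)Q)$ to compute $\operatorname{cov}(\langle v, W_t\rangle, \langle w, W_s\rangle) = \operatorname{cov}(\langle v, W_s\rangle, \langle w, W_s\rangle) + \operatorname{cov}(\langle v, W_t - W_s\rangle, \langle w, W_s\rangle)$; the second term vanishes by independence and the first equals $s\langle v, Q w\rangle$ since $\L(W_s) = \mathcal{N}(0, sQ)$ means $\E[\langle v, W_s\rangle\langle w, W_s\rangle] = s\langle Q v, w\rangle = s \langle v, Q w\rangle$ by self-adjointness.

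For the converse, suppose $Z$ satisfies the four bullet points. Then $Z_0 = 0$ and continuity are immediate. The key point is to identify the law of the increments. Fix $0 \le s \le t$. By the fourth bullet point the real vector $(\langle v, Z_t - Z_s\rangle)$ — or more generally any finite family $\langle v_i, Z_{t_i}\rangle - \langle v_i, Z_{s_i}\rangle$ — is Gaussian, being a linear image of a jointly Gaussian family; hence $Z_t - Z_s$ is an $H$-valued Gaussian random variable, and it is centred since each $\langle v, Z_u\rangle$ is centred. Its covariance operator $Q_{s,t}$ is determined by $\langle Q_{s,t} v, w\rangle = \operatorname{cov}(\langle v, Z_t - Z_s\rangle, \langle w, Z_t - Z_s\rangle)$, which by bilinearity of covariance and the third bullet point expands to $t\langle v, Q w\rangle - s\langle v, Qw\rangle - s\langle v, Q w\rangle + s\langle v, Q w\rangle = (t-s)\langle v, Q w\rangle$; thus $Q_{s,t} = (t-s)Q$ and $\L(Z_t - Z_s) = \mathcal{N}(0,(t-s)Q)$, giving the fourth defining property. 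For independence of increments, I would argue that for disjoint intervals the corresponding increments form a jointly Gaussian family (again by the fourth bullet, applied across several time points), so independence is equivalent to vanishing of cross-covariances; and for $u \le s \le t$ the computation $\operatorname{cov}(\langle v, Z_t - Z_s\rangle, \langle w, Z_s - Z_u\rangle)$ telescopes to $s\langle v, Qw\rangle - u\langle v, Qw\rangle - s\langle v, Qw\rangle + u\langle v, Qw\rangle = 0$ using the third bullet point. An induction over the number of increments, or a direct appeal to the fact that a finite family of jointly Gaussian vectors with pairwise-orthogonal blocks is independent, then yields mutual independence of finitely many increments.

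The main obstacle, and the only place requiring care, is the passage from "all one-dimensional projections $\langle v, Z\rangle$ are jointly Gaussian" to "$Z_t - Z_s$ is an $H$-valued Gaussian vector" and, relatedly, to genuine independence (not merely uncorrelatedness) of the increments as $H$-valued random variables. For the Gaussianity one checks the characteristic functional: for $h \in H$, $\E[\exp(i\langle h, Z_t - Z_s\rangle)] = \E[\exp(i\langle h, Z_t\rangle - i\langle h, Z_s\rangle)]$, and since $\langle h, Z_t\rangle - \langle h, Z_s\rangle$ is a centred real Gaussian with variance $(t-s)\langle h, Q h\rangle$ by the third bullet point, this equals $\exp(-\tfrac12 (t-s)\langle Q h, h\rangle)$, which is exactly the characteristic functional of $\mathcal{N}(0, (t-s)Q)$. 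For independence one uses that the joint characteristic functional of finitely many increments factorises: evaluating $\E[\exp(i\sum_j \langle h_j, Z_{t_j} - Z_{t_{j-1}}\rangle)]$ and using that the exponent is a centred real Gaussian whose variance, by the covariance structure in the third bullet point, is the sum of the individual variances (the cross terms vanishing as computed above), we get the product of the individual characteristic functionals, which characterises independence of $H$-valued random variables. This reduces everything to the real Gaussian case and the covariance identity in the third bullet point, and no further input is needed.
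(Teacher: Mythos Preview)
The paper does not actually prove this lemma: it is introduced as ``a different standard characterization'' and stated without proof, so there is no argument in the paper to compare against. Your proof is correct and is exactly the standard one: the forward direction reads off the four properties from the definition and the remark that finite-dimensional distributions of $W$ are Gaussian, and the converse recovers the increment law and independence from the covariance identity via characteristic functionals of the real Gaussian projections. The only point worth tightening is a minor one of presentation: in the third bullet the paper writes the covariance for $0\le s\le t$, so when you expand $\operatorname{cov}(\langle v, Z_t - Z_s\rangle, \langle w, Z_s - Z_u\rangle)$ you are implicitly using symmetry of covariance to handle terms like $\operatorname{cov}(\langle v, Z_s\rangle,\langle w, Z_t\rangle)$ with the time arguments swapped; this is of course harmless, but you may want to say it explicitly.
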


Independent Gaussian random variables with values in a Hilbert space
asymptotically allow the following bounds.
\begin{lem}\label{lem:estimate for Gaussian RV}
 Let $Z_n\sim \mathcal{N}(0,Q)$, $n\in \mathbb{N}$, be independent. Then there exists an a.s. finite real valued random
 variable C such that
\begin{align*}
 \lVert Z_n\rVert_H\leq C\sqrt{\log n}\text{ }\mathbb{P}\text{ }a.s..
\end{align*}
\end{lem}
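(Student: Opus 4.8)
The plan is to reduce the Hilbert-space statement to the classical Borel–Cantelli argument by exploiting the fact that $\lVert Z_n\rVert_H^2$ has Gaussian-type tails uniformly in $n$. Since all $Z_n$ share the same law $\mathcal N(0,Q)$ with $Q$ trace class, write $\operatorname{tr}(Q)=\sum_{k\ge 0}\lambda_k=:\sigma^2<\infty$, so that $\E\lVert Z_n\rVert_H^2=\sigma^2$ for every $n$. The first step is to obtain an exponential moment bound: I would show that there exists $\delta>0$ with $\sup_n\E[\exp(\delta\lVert Z_n\rVert_H^2)]=:M<\infty$. This follows from Fernique's theorem applied to the (single) Gaussian law $\mathcal N(0,Q)$ on $H$, which guarantees $\E[\exp(\delta\lVert Z\rVert_H^2)]<\infty$ for $\delta$ small enough; since the law is the same for all $n$, the supremum is just this one finite number $M$.

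Given the exponential moment bound, the second step is a Markov/Chernoff estimate: for any $a>0$,
\begin{align*}
 \P\big(\lVert Z_n\rVert_H > a\big) = \P\big(\exp(\delta\lVert Z_n\rVert_H^2) > \exp(\delta a^2)\big) \le M\exp(-\delta a^2).
\end{align*}
Now choose $a=a_n:=C_0\sqrt{\log n}$ with $C_0$ a constant to be fixed. Then $\P(\lVert Z_n\rVert_H > C_0\sqrt{\log n}) \le M\, n^{-\delta C_0^2}$, and picking $C_0$ large enough that $\delta C_0^2 > 1$ makes $\sum_n \P(\lVert Z_n\rVert_H > C_0\sqrt{\log n}) < \infty$. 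By the Borel–Cantelli lemma, $\P$-almost surely only finitely many of the events $\{\lVert Z_n\rVert_H > C_0\sqrt{\log n}\}$ occur; hence there is an a.s. finite random index $N(\omega)$ beyond which $\lVert Z_n\rVert_H \le C_0\sqrt{\log n}$. Finally, absorb the finitely many exceptional terms by setting
\begin{align*}
 C(\omega) := \max\left\{C_0,\ \max_{2\le n\le N(\omega)} \frac{\lVert Z_n(\omega)\rVert_H}{\sqrt{\log n}}\right\},
\end{align*}
which is a.s. finite and satisfies $\lVert Z_n\rVert_H \le C\sqrt{\log n}$ for all $n\ge 2$ (and trivially one may reindex or handle $n=1$ separately, e.g. by starting the estimate at $n=2$ where $\log n>0$).

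The main obstacle — really the only nontrivial point — is establishing the uniform exponential moment bound in the first step; everything after that is the standard Borel–Cantelli logarithmic-growth argument. If one prefers to avoid invoking Fernique's theorem as a black box, an elementary alternative is to bound $\lVert Z_n\rVert_H^2 = \sum_k \langle Z_n, e_k\rangle^2 = \sum_k \lambda_k g_{n,k}^2$ where the $g_{n,k}$ are i.i.d. standard normals across $k$ for fixed $n$; then for $\delta < 1/(2\max_k\lambda_k)$ one computes $\E[\exp(\delta\lVert Z_n\rVert_H^2)] = \prod_k (1-2\delta\lambda_k)^{-1/2}$, which converges because $\sum_k \lambda_k<\infty$, and this value does not depend on $n$. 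Either route delivers the finite constant $M$, and the proof then concludes as above.
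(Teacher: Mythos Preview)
Your proposal is correct and follows essentially the same route as the paper: establish the Gaussian tail bound $\P(\lVert Z\rVert_H>a)\le c\,e^{-\lambda a^2}$ from exponential integrability of $\lVert Z\rVert_H^2$, then apply Borel--Cantelli with $a_n\sim\sqrt{\log n}$. The only difference is cosmetic---you justify the exponential moment via Fernique's theorem (or the explicit product $\prod_k(1-2\delta\lambda_k)^{-1/2}$), whereas the paper simply invokes exponential integrability without further comment.
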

\begin{proof}
By using the exponential integrability of $\lambda \lVert Z_n\rVert^2_H$ for small
enough $\lambda$ and Markov's inequality, we obtain that there exist
$\lambda, c\in \R_+$ such that for any $a>0$
\begin{align*}
 \P(\lVert Z\rVert_H>a)\le ce^{-\lambda a^2}.
\end{align*}
Thus for $\alpha > 1$ and $n$ big enough
\begin{align*}
 \mathbb{P}\left(\lVert Z_n\rVert_H\geq \sqrt{\lambda^{-1}\alpha \log{n}}\right)\leq c n^{-\alpha}.
\end{align*}
We set $A_n=\left\{ \lVert Z_n\rVert_H\geq \sqrt{\lambda^{-1}\alpha \log{n}}\right\}$ and have
\begin{align*}
 \sum_{n=0}^{\infty}\mathbb{P}(A_n)< \infty.
\end{align*}
Hence the lemma of Borel-Cantelli gives, that $\mathbb{P}(\limsup_n
A_n)=0$, i.e. $\mathbb{P}- a.s.$ for almost all $n\in\mathbb{N}$ we have
$\lVert Z_n\rVert_H\leq \sqrt{\lambda^{-1}\alpha \log{n}}$. In other words
\begin{align*}
 C:=\sup_{n\ge 0} \frac{\lVert Z_n \rVert_H}{\sqrt{\log{n}}}< \infty\text{ }\mathbb{P}-a.s.
\end{align*}
\end{proof}

Using Lemma \ref{lem:estimate for Gaussian RV} and the characterization of
$Q$-Wiener processes of Lemma \ref{lem:characterisation of Wienerprocess},
we now obtain its Schauder decomposition which can be seen as a Gaussian
version of Lemma \ref{lem:hoelderschauder}.

\begin{prop}\label{lem:representation of Wienerprocess}
Let $\alpha \in (0,1/2)$, let $(\phi_n)_{n\ge 0}$ be the Schauder functions and $(Z_n)_{n\ge 0}$ a
sequence of independent, $\mathcal{N}(0,Q)$-distributed Gaussian variables,
where $Q$ is a positive self adjoint trace class operator on $H$. The
series defined process
\begin{align*}
 W_t=\sum_{n=0}^{\infty}\phi_n(t)Z_n,\quad t\in[0,1],
\end{align*}
converges $\P$-a.s. with respect to the $\lVert\cdot \rVert_\alpha$-norm on $[0,1]$ and is an $H$-valued $Q$-Wiener
process.
\end{prop}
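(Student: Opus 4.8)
The plan is to verify the four conditions of Lemma \ref{lem:characterisation of Wienerprocess}, after first establishing that the series converges almost surely in $C_\alpha([0,1];H)$ (which in particular gives continuous trajectories). For the convergence, I would combine Lemma \ref{lem:estimate for Gaussian RV} with Ciesielski's isomorphism. Write $c_n(\alpha)$ as in Theorem \ref{CiesilskiH} and set $\eta_n := c_n(\alpha) Z_n$. By Lemma \ref{lem:estimate for Gaussian RV} there is an a.s. finite $C$ with $\lVert Z_n\rVert_H \le C\sqrt{\log n}$; since $c_n(\alpha) = 2^{k(\alpha - 1/2) + \alpha - 1}$ for $n = 2^k + l$ and $\alpha < 1/2$, the factor $c_n(\alpha)$ decays geometrically in $k$, i.e. polynomially in $n$, which dominates $\sqrt{\log n}$. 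Hence $\lVert \eta_n\rVert_H \to 0$ a.s., so a.s. $(\eta_n)_{n\ge 0} \in \mathcal{C}_0^H$, and therefore $(T^H_\alpha)^{-1}((\eta_n)) = \sum_n \frac{\eta_n}{c_n(\alpha)}\phi_n = \sum_n Z_n \phi_n$ lies in $C^0_\alpha([0,1];H)$ a.s. The partial sums converge in $\lVert\cdot\rVert_\alpha$ because $(T^H_\alpha)^{-1}$ is a bounded operator and the corresponding partial sums $(\eta_0,\dots,\eta_N,0,0,\dots)$ converge to $(\eta_n)$ in $\lVert\cdot\rVert_\infty$ (the tail $\sup_{n > N}\lVert\eta_n\rVert_H \to 0$). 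This simultaneously yields conditions (continuous trajectories) and of course $W_0 = \sum_n \phi_n(0) Z_n = 0$ since every $\phi_n(0) = 0$.

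Next I would check the covariance identity. Fix $v, w \in H$ and $0 \le s \le t \le 1$. Using that $\langle v, Z_n\rangle$ and $\langle w, Z_m\rangle$ are jointly Gaussian with $\mathrm{cov}(\langle v, Z_n\rangle, \langle w, Z_m\rangle) = \delta_{nm}\langle v, Q w\rangle$ (independence across $n$, and the covariance structure of $\mathcal{N}(0,Q)$), and interchanging the (a.s.\ and $L^2$-convergent) sums with the expectation, I get
\begin{align*}
\mathrm{cov}(\langle v, W_t\rangle, \langle w, W_s\rangle)
 = \sum_{n=0}^\infty \phi_n(t)\phi_n(s)\,\langle v, Q w\rangle.
\end{align*}
Since the Haar functions $(\chi_n)$ form a CONS of $L^2([0,1])$ and $\phi_n(t) = \langle \chi_n, \mathbf 1_{[0,t]}\rangle_{L^2}$, Parseval gives $\sum_n \phi_n(t)\phi_n(s) = \langle \mathbf 1_{[0,s]}, \mathbf 1_{[0,t]}\rangle_{L^2} = s\wedge t$, which is exactly the required identity. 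The justification of the interchange of sum and expectation can be done either by dominated convergence using the a.s.\ bound on $\lVert Z_n\rVert_H$, or by noting $L^2$-convergence of the partial sums in $H$ (the partial sums form a Cauchy sequence in $L^2(\Omega;H)$ since $\sum_n \phi_n(t)^2 \,\mathbb E\lVert Z_n\rVert_H^2 = (t\wedge t)\,\mathrm{tr}(Q) < \infty$).

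Finally, for the Gaussian-process property: for any $v_1,\dots,v_n \in H$ and $t_1,\dots,t_n \in [0,1]$, the vector $(\langle v_i, W_{t_i}\rangle)_{i}$ is the a.s.\ limit of the finite linear combinations $\sum_{m \le N} (\langle v_i, Z_m\rangle \phi_m(t_i))_i$ of the jointly Gaussian family $(\langle v_i, Z_m\rangle)_{i,m}$; a limit in probability of $\R^n$-valued Gaussian vectors is Gaussian, so the claim follows. Having verified all four conditions, Lemma \ref{lem:characterisation of Wienerprocess} identifies $W$ as a $Q$-Wiener process. The main obstacle is the first step — obtaining convergence in the stronger H\"older norm rather than merely uniformly — but Ciesielski's isomorphism reduces this cleanly to the elementary estimate $c_n(\alpha)\sqrt{\log n} \to 0$, which holds precisely because $\alpha < 1/2$; everything else is a routine interchange-of-limits argument.
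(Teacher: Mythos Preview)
Your proof is correct and follows essentially the same route as the paper: the core step---showing $\lVert\cdot\rVert_\alpha$-convergence by observing that $c_n(\alpha)Z_n \to 0$ in $H$ a.s.\ via Lemma~\ref{lem:estimate for Gaussian RV} and then invoking Ciesielski's isomorphism---is identical to the paper's argument. The only difference is cosmetic: the paper dismisses the verification of the zero initial value, the covariance structure, and Gaussianity as ``standard'', whereas you spell these out explicitly (and correctly, via Parseval for the Haar system and the closedness of Gaussian laws under limits in probability).
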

\begin{proof}
We have to show that the process defined by the series satisfies the
conditions given in Lemma \ref{lem:characterisation of Wienerprocess}. The
first and the two last conditions concerning the covariance structure and
Gaussianity of scalar products have standard verifications. Let us just
argue for absolute and $\lVert\cdot \rVert_\alpha$-convergence of the series, thus proving
H\"older-continuity of the trajectories.

Since $T^H_\alpha$ is an isomorphism and since any single term of the series is even Lipschitz-continuous, it suffices to show that
\begin{align*}
   \left(T^H_\alpha \left( \sum_{n=0}^m \phi_n Z_n\right): m \in \N\right)
\end{align*}
is a Cauchy sequence in $\mathcal{C}_0^H$. Let us first calculate the image of  term $N$ under $T^H_\alpha$. We have
\begin{align*}
   (T^H_\alpha \phi_n Z_n)_N = 1_{\{n = N\}} c_N(\alpha) Z_N.
\end{align*}
Therefore for $m_1, m_2\ge 0, m_1\le m_2$
\begin{align*}
   \sum_{n=m_1}^{m_2} (T^H_\alpha \phi_n Z_n)_N = 1_{\{m_1 \le N \le m_2\}}c_N(\alpha) Z_N = \left(T^H_\alpha\left( \sum_{n=m_1}^{m_2} \phi_n Z_n\right)\right)_N.
\end{align*}
So if we can prove that $c_N(\alpha) Z_N$ a.s. converges to $0$ in $H$ as $N \rightarrow \infty$, the proof is complete. But this follows immediately from Lemma \ref{lem:estimate for Gaussian RV}: $c_N(\alpha)$ decays exponentially fast, and $\lVert Z_N \rVert_H \le C \sqrt{\log N}$.
\end{proof}

In particular we showed that for $\alpha < 1/2$ $W$ a.s. takes its trajectories in
\begin{align*}
   C_\alpha^0([0,1]; H) := \left\{ F:[0,1] \rightarrow H, F(0) = 0,
   \lim_{\delta \rightarrow 0} \sup_{\substack{t \neq s, \\ |t-s|
   < \delta}} \frac{\lVert F(t) - F(s) \rVert_H}{|t-s|^\alpha} = 0 \right\}
\end{align*}
By Lipschitz continuity of the scalar product, we also have $\langle F, e_k \rangle \in C_\alpha^0 ([0,1];\R)$:

Since $P_k$
and $R_k$ are orthogonal projectors and therefore Lipschitz continuous, we
obtain that for $F \in C_\alpha^0([0,1];H)$
$$
   \sup_{k \ge 0} \lVert\langle F, e_k \rangle \rVert_\alpha \le \lVert F \rVert_\alpha.
$$
We also saw that $T^H_\alpha(W)$ is well defined almost surely. As a
special case this is also true for the real valued Brownian motion. We have
by Proposition \ref{lem:representation of Wienerprocess}
\begin{align*}
 T^H_\alpha(W)=(c_n(\alpha) Z_n)
\end{align*}
where $(Z_n)_{n\ge 0}$ is a sequence of i.i.d.
$\mathcal{N}(0,Q)-$variables.

Plainly, the representation of the preceding Lemma can be used to prove the
representation formula for $Q$-Wiener processes by scalar Brownian motions
according to \cite{DaPrato}, Theorem 4.3.
\begin{prop}\label{prop: series representation of Wienerprocess}
   Let $W$ be a $Q$-Wiener process. Then
   \begin{align*}
      W(t) = \sum_{k=0}^\infty \sqrt{\lambda_k} \beta_k (t) e_k\text{, }t\in [0,1],
   \end{align*}
   where the series on the right hand side $\mathbb{P}$-a.s. converges uniformly on $[0,1]$,
   and $(\beta_k)_{k\ge 0}$ is a sequence of independent real valued Brownian motions.
\end{prop}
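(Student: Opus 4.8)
The plan is to derive this representation directly from the Schauder decomposition already established in Proposition \ref{lem:representation of Wienerprocess}, rather than from scratch. By that proposition we may write $W(t) = \sum_{n=0}^\infty \phi_n(t) Z_n$ with $(Z_n)$ i.i.d.\ $\mathcal{N}(0,Q)$, the series converging $\P$-a.s.\ in $\lVert\cdot\rVert_\alpha$ for any $\alpha<1/2$, hence in particular uniformly. First I would fix the eigenbasis $(e_k)$ of $Q$ with $Qe_k=\lambda_ke_k$, and expand each $Z_n$ in this basis: writing $Z_n = \sum_{k=0}^\infty \langle Z_n, e_k\rangle e_k$, the scalar coefficients $\langle Z_n,e_k\rangle$ are centred Gaussian with $\mathrm{cov}(\langle Z_n,e_k\rangle,\langle Z_m,e_j\rangle)=\delta_{nm}\delta_{kj}\lambda_k$, so the family $\bigl(\lambda_k^{-1/2}\langle Z_n,e_k\rangle : n\ge 0,\ k\ge 0,\ \lambda_k>0\bigr)$ is an i.i.d.\ collection of standard normal variables, and $\langle Z_n,e_k\rangle=0$ a.s.\ whenever $\lambda_k=0$.

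Next I would define, for each $k$ with $\lambda_k>0$, the scalar process $\beta_k(t) := \lambda_k^{-1/2}\sum_{n=0}^\infty \phi_n(t)\langle Z_n,e_k\rangle$; for $k$ with $\lambda_k=0$ set $\beta_k\equiv 0$ (the choice is immaterial since these terms drop out of the sum). Each $\beta_k$ is, by the one-dimensional case of Proposition \ref{lem:representation of Wienerprocess} applied to the i.i.d.\ standard normals $(\lambda_k^{-1/2}\langle Z_n,e_k\rangle)_n$, a standard real-valued Brownian motion, and the $(\beta_k)_{k\ge 0}$ are independent because they are built from disjoint blocks of the i.i.d.\ Gaussian array. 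It then remains to rearrange: formally,
\begin{align*}
   W(t) = \sum_{n=0}^\infty \phi_n(t) Z_n
        = \sum_{n=0}^\infty \phi_n(t) \sum_{k=0}^\infty \langle Z_n,e_k\rangle e_k
        = \sum_{k=0}^\infty \Bigl(\sum_{n=0}^\infty \phi_n(t)\langle Z_n,e_k\rangle\Bigr) e_k
        = \sum_{k=0}^\infty \sqrt{\lambda_k}\,\beta_k(t)\, e_k,
\end{align*}
which is the claimed identity; the inner sum over $n$ converges for each fixed $k$ by the scalar Ciesielski estimate and Lemma \ref{lem:estimate for Gaussian RV}.

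The main obstacle is justifying the interchange of the two summations and upgrading it to $\P$-a.s.\ uniform convergence of the outer series in $k$. I would handle this exactly as in the proof of Lemma \ref{lem:hoelderschauder}: apply $T^H_\alpha$, which is an isomorphism onto $\mathcal{C}_0^H$, to the partial sums $\sum_{k=0}^K \sqrt{\lambda_k}\beta_k e_k$ and check that their images form an a.s.\ Cauchy sequence in $\mathcal{C}_0^H$. Concretely, $\bigl(T^H_\alpha(\sqrt{\lambda_k}\beta_k e_k)\bigr)_N = c_N(\alpha)\langle Z_N,e_k\rangle e_k$, so the tail of the outer sum, evaluated at coordinate $N$, is $c_N(\alpha) R_K Z_N$ (with $R_K$ the projection onto $\overline{\mathrm{span}}(e_{K+1},\dots)$), whose norm is at most $c_N(\alpha)\lVert Z_N\rVert_H \le c_N(\alpha) C\sqrt{\log N}$ by Lemma \ref{lem:estimate for Gaussian RV}; since $c_N(\alpha)$ decays exponentially in $N$, this supremum over $N$ tends to $0$ as $K\to\infty$ (using $R_K Z_N\to 0$ in $H$ and dominated convergence of $\sup_N \lVert R_K Z_N\rVert_H/\sqrt{\log N}$, paralleling the $R_mF$ argument of Lemma \ref{lem:hoelderschauder}). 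This gives a.s.\ convergence in $\lVert\cdot\rVert_\alpha$, which dominates $\lVert\cdot\rVert_\infty$, so the stated uniform convergence follows a fortiori, and the rearrangement is legitimate.
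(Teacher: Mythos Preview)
Your proposal is correct and follows essentially the same route as the paper's proof: both start from the Schauder representation $W=\sum_n\phi_n Z_n$ of Proposition~\ref{lem:representation of Wienerprocess}, expand each $Z_n$ in the eigenbasis of $Q$, interchange the order of summation using Ciesielski-type estimates together with Lemma~\ref{lem:estimate for Gaussian RV}, and then identify the inner sums as independent scalar Brownian motions via the one-dimensional case. Your write-up simply spells out in detail what the paper compresses into the phrase ``using arguments as in the proof of Theorem~\ref{CiesilskiH} and Lemma~\ref{lem:estimate for Gaussian RV}.''
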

\begin{proof}
Using arguments as in the proof of Theorem \ref{CiesilskiH} and Lemma
\ref{lem:estimate for Gaussian RV} to justify changes in the order of
summation we get
\begin{align*}
 W=\sum_{n=0}^{\infty}\phi_n Z_n=\sum_{k\ge 0}\sum_{n\ge 0}\phi_n \langle Z_n,e_k\rangle e_k=\sum_{k\ge 0}
 \sqrt{\lambda_k}\sum_{n\ge 0}\phi_n N_{n,k}e_k=\sum_{k\ge 0} \sqrt{\lambda_k}\beta_k e_k,
\end{align*}
where the equivalences are $\P$-a.s. and $(N_{n,k})_{n,k\ge 0},
(\beta_k)_{k\ge 0}$ are real valued iid $\mathcal{N}(0,1)$  random
variables resp. Brownian motions. For the last step we applied Proposition
\ref{lem:representation of Wienerprocess} for the one-dimensional case.
\end{proof}

\subsection{Large deviations}

Let us recall some basic notions of the theory of large deviations that
will suffice to prove the large deviation principle for Hilbert space
valued Wiener processes. We follow \citet{Dembo}. Let $X$ be a topological
Hausdorff space. Denote its Borel $\sigma$-algebra by $\B$.

\begin{defn}[Rate function]
   A function $I : X \rightarrow [0, \infty]$ is called a rate function if it is lower semi-continuous, i.e. if for every $C \ge 0$ the set
   \begin{align*}
       \Psi_I (C) := \{ x \in X: I(x) \le C \}
   \end{align*}
   is closed. It is called a good rate function, if $\Psi_I(C)$ is compact. For $A \in \B$ we define
   $I(A):= \inf_{x \in A} I(x)$.
\end{defn}

\begin{defn}[Large deviation principle]
   Let $I$ be a rate function. A family of probability measures $(\mu_\varepsilon)_{\varepsilon > 0}$ on $(X, \B)$
   is said to satisfy the large deviation principle (LDP) with rate function $I$ if for any closed
   set $F \subset X$ and any open set $G \subset X$ we have
   \begin{align*}
      \limsup_{\varepsilon \rightarrow 0} \varepsilon \log \mu_\varepsilon (F)& \le - I(F) \mbox{ and} \\
      \liminf_{\varepsilon \rightarrow 0} \varepsilon \log \mu_\varepsilon (G) & \ge -
      I(G).
   \end{align*}
\end{defn}

\begin{defn}[Exponential tightness]
   A family of probability measures $(\mu_\varepsilon)_{\varepsilon > 0}$ is said to be exponentially tight
   if for every $a>0$ there exists a compact set $K_a \subset X$ such that
   \begin{align*}
      \limsup_{\varepsilon \rightarrow 0} \varepsilon \log \mu_\varepsilon (K_a^c) <
      -a.
   \end{align*}
\end{defn}

In our approach to Schilder's Theorem for Hilbert space valued Wiener
processes we shall mainly use the following proposition which basically
states that the rate function has to be known for elements of a sub-basis
of the topology.

\begin{prop}\label{prop:exponential und ldp fuer offene
baelle, dann ldp}
   Let $\mathcal{G}_0$ be a collection of open sets in the topology of $X$ such that for every
   open set $ G \subset X$ and for every $x \in G$ there exists $G_0 \in \mathcal{G}_0$ such that
   $x \in G_0 \subset G$. Let $I$ be a rate function and let $(\mu_\varepsilon)_{\varepsilon > 0}$ be an
   exponentially tight family of probability measures. Assume that for every $G \in \mathcal{G}_0$ we have
   \begin{align*}
      - \inf_{x\in G} I(x) = \lim_{\varepsilon \rightarrow 0}\varepsilon \log \mu_\varepsilon (G).
   \end{align*}
   Then $I$ is a good rate function, and $(\mu_\varepsilon)_\varepsilon$ satisfies an LDP
   with rate function $I$.
\end{prop}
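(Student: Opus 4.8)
The plan is to split the LDP into its two inequalities, obtain each from the hypothesis together with exponential tightness, and then deduce goodness of $I$ at the end. I would start with the \emph{lower bound}, since it uses only the hypothesis on $\mathcal{G}_0$. Given an open set $G$ and a point $x\in G$, pick $G_0\in\mathcal{G}_0$ with $x\in G_0\subset G$; then
\begin{align*}
\liminf_{\varepsilon\to0}\varepsilon\log\mu_\varepsilon(G)\ \ge\ \lim_{\varepsilon\to0}\varepsilon\log\mu_\varepsilon(G_0)\ =\ -\inf_{y\in G_0}I(y)\ \ge\ -I(x),
\end{align*}
and taking the supremum over $x\in G$ gives $\liminf_{\varepsilon\to0}\varepsilon\log\mu_\varepsilon(G)\ge -I(G)$.

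Next I would prove the \emph{upper bound}, first for compact sets. Fix a compact $K$ and a real number $a<I(K)$. For each $x\in K$ we have $I(x)>a$; since $I$ is lower semicontinuous the set $\{y:I(y)>a\}$ is open, so there is $G_x\in\mathcal{G}_0$ with $x\in G_x\subset\{y:I(y)>a\}$, hence $\inf_{y\in G_x}I(y)\ge a$ and, by the hypothesis, $\limsup_{\varepsilon\to0}\varepsilon\log\mu_\varepsilon(G_x)\le -a$. Covering $K$ by finitely many $G_{x_1},\dots,G_{x_N}$ and using $\mu_\varepsilon(K)\le N\max_{i}\mu_\varepsilon(G_{x_i})$ together with the fact that the $\limsup$ of a finite maximum is the maximum of the $\limsup$s yields $\limsup_{\varepsilon\to0}\varepsilon\log\mu_\varepsilon(K)\le -a$; letting $a\uparrow I(K)$ gives the compact upper bound. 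To reach an arbitrary closed set $F$, fix $a>0$ and use exponential tightness to pick a compact $K_a$ with $\limsup_{\varepsilon\to0}\varepsilon\log\mu_\varepsilon(K_a^c)<-a$; then $F\cap K_a$ is compact, $\mu_\varepsilon(F)\le 2\max(\mu_\varepsilon(F\cap K_a),\mu_\varepsilon(K_a^c))$, and the compact bound together with $I(F\cap K_a)\ge I(F)$ gives $\limsup_{\varepsilon\to0}\varepsilon\log\mu_\varepsilon(F)\le\max(-I(F),-a)$; letting $a\to\infty$ finishes the closed upper bound.

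It remains to show $I$ is \emph{good}. The level set $\Psi_I(C)$ is closed because $I$ is a rate function, so I only need it to be contained in a compact set. Given $C\ge0$, choose by exponential tightness a compact $K$ with $\limsup_{\varepsilon\to0}\varepsilon\log\mu_\varepsilon(K^c)<-(C+1)$. If some $x\in\Psi_I(C)$ lay outside $K$, then $K^c$ would be an open neighbourhood of $x$, and the lower bound already proved would give $\liminf_{\varepsilon\to0}\varepsilon\log\mu_\varepsilon(K^c)\ge -I(x)\ge -C>-(C+1)$, contradicting the choice of $K$. Hence $\Psi_I(C)\subset K$, so it is a closed subset of a compact set (here $X$ being Hausdorff is used), hence compact.

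All the estimates are routine; the only delicate points — and the only places where the three hypotheses genuinely interact — are the compact upper bound, where lower semicontinuity of $I$ is exactly what makes the threshold sets $\{I>a\}$ open so that members of $\mathcal{G}_0$ can be found inside them, and the exponential-tightness step that upgrades ``compact'' to ``closed'' (and, separately, furnishes the compact level sets). I do not expect a substantial obstacle beyond this bookkeeping.
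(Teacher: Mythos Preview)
Your argument is correct and follows essentially the same route as the paper's proof: the lower bound is identical, and for the compact upper bound the paper uses the truncation $I^\delta(x)=(I(x)-\delta)\wedge\delta^{-1}$ where you use a threshold $a<I(K)$, but the underlying mechanism (open superlevel sets of $I$ from lower semicontinuity, then a finite subcover) is the same. You are in fact more explicit than the paper, which stops at the compact upper bound and simply cites Dembo--Zeitouni for the passage from compact to closed sets and for the goodness of $I$ via exponential tightness.
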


\begin{proof}
Let us first establish the lower bound. In fact, let $G$ be an open set. Choose $x\in G$, and a basis set $G_0$ such that $x\in G_0\subset G.$
Then evidently
$$\liminf_{\varepsilon\to 0} \varepsilon\ln \mu_\varepsilon(G) \ge \liminf_{\varepsilon\to 0} \varepsilon\ln \mu_\varepsilon(G_0) = - \inf_{y\in G_0} I(y) \ge -I(x).$$
Now the lower bound follows readily by taking the $\sup$ of $-I(x), x\in G,$
on the right hand side, the left hand side not depending on $x$.

For the upper bound, fix a compact subset $K$ of $X.$ For $\delta > 0$ denote
$$I^\delta(x) = (I(x)-\delta)\wedge \frac{1}{\delta},\quad x\in X.$$
For any $x\in K$, use the lower semicontinuity of $I$, more
precisely that $\{y\in X: I(y)> I^\delta(x)\}$ is open to choose a
set $G_x\in \mathcal{G}_0$ such that
$$- I^\delta(x) \ge \limsup_{\varepsilon\to 0} \varepsilon \ln \mu_\varepsilon(G_x).$$ Use compactness of $K$ to extract from the open cover
$K \subset \cup_{x\in K} G_x$ a finite subcover $K\subset \cup_{i=1}^n G_{x_i}.$
Then with a standard argument we obtain
$$\limsup_{\varepsilon\to 0} \varepsilon \ln \mu_\varepsilon(K) \le \max_{1\le i\le n} \limsup_{\varepsilon\to 0} \varepsilon \ln \mu_\varepsilon(G_{x_i})\le - \min_{1\le i\le n} I^\delta(x_i)
\le - \inf_{x\in K} I^\delta(x).$$
Now let $\delta\to 0.$ Finally use exponential tightness to show that $I$ is a good rate function (see \cite{Dembo}, Section 4.1).
\end{proof}

The following propositions show how large deviation principles are
transferred between different topologies on a space, or via continuous maps
to other topological spaces.

\begin{prop}[Contraction principle]\label{prop:contraction principle}
   Let $X$ and $Y$ be topological Hausdorff spaces, and let $I: X \rightarrow [0, \infty]$ be a good rate function. Let $f: X \rightarrow Y$ be a continuous mapping. Then
   \begin{align*}
      I' : Y \rightarrow [0, \infty], I'(y) = \inf\{ I(x): f(x) = y \}
   \end{align*}
   is a good rate function, and if $(\mu_\varepsilon)_{\varepsilon > 0}$
   satisfies an LDP with rate function $I$ on $X$, then $(\mu_\varepsilon \circ f^{-1})_{\varepsilon > 0}$
   satisfies an LDP with rate function $I'$ on $Y$.
\end{prop}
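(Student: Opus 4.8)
The plan is to prove the contraction principle by the standard argument, which splits naturally into three parts: verifying that $I'$ is a good rate function, establishing the large deviation upper bound for $\nu_\varepsilon := \mu_\varepsilon \circ f^{-1}$ on closed sets of $Y$, and establishing the lower bound on open sets.

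First I would show that $I'$ is a good rate function. Fix $C \ge 0$ and consider $\Psi_{I'}(C) = \{ y \in Y : I'(y) \le C \}$. The key point is that since $\Psi_I(C)$ is compact (because $I$ is good) and $f$ is continuous, $f(\Psi_I(C))$ is compact, hence closed in the Hausdorff space $Y$. I claim $\Psi_{I'}(C) = f(\Psi_I(C))$. The inclusion $\supseteq$ is immediate. For $\subseteq$, if $I'(y) \le C$ then the infimum defining $I'(y)$ is attained: indeed, take $x_n$ with $f(x_n) = y$ and $I(x_n) \downarrow I'(y) \le C$; then $(x_n) \subset \Psi_I(C+1)$ which is compact, so along a subnet $x_n \to x$ with $f(x) = y$ by continuity, and $I(x) \le \liminf I(x_n) = I'(y) \le C$ by lower semicontinuity, so $x \in \Psi_I(C)$ and $y \in f(\Psi_I(C))$. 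Thus $\Psi_{I'}(C)$ is compact, so $I'$ is lower semicontinuous (its sublevel sets are even compact), i.e. a good rate function. This attainment of the infimum is the one genuinely delicate point, and it is exactly where goodness of $I$ is used.

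Next, the two bounds. For a closed set $F \subset Y$, the set $f^{-1}(F)$ is closed in $X$ by continuity, so by the LDP for $\mu_\varepsilon$,
\begin{align*}
   \limsup_{\varepsilon \to 0} \varepsilon \log \nu_\varepsilon(F) = \limsup_{\varepsilon \to 0} \varepsilon \log \mu_\varepsilon(f^{-1}(F)) \le - \inf_{x \in f^{-1}(F)} I(x).
\end{align*}
It remains to identify $\inf_{x \in f^{-1}(F)} I(x)$ with $\inf_{y \in F} I'(y)$; but $\inf_{x \in f^{-1}(F)} I(x) = \inf_{y \in F} \inf_{x : f(x) = y} I(x) = \inf_{y \in F} I'(y)$, which is just a rearrangement of the double infimum (with the convention $\inf \emptyset = +\infty$, consistent with $I'$ being $+\infty$ off the range of $f$). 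Symmetrically, for open $G \subset Y$, $f^{-1}(G)$ is open, and the same rearrangement of infima gives
\begin{align*}
   \liminf_{\varepsilon \to 0} \varepsilon \log \nu_\varepsilon(G) = \liminf_{\varepsilon \to 0} \varepsilon \log \mu_\varepsilon(f^{-1}(G)) \ge - \inf_{x \in f^{-1}(G)} I(x) = - \inf_{y \in G} I'(y).
\end{align*}
Together these are precisely the LDP for $\nu_\varepsilon$ with rate function $I'$.

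I do not expect a serious obstacle here: the argument is entirely soft, using only continuity of $f$, lower semicontinuity and goodness of $I$, and the elementary identity $\inf_{x \in f^{-1}(A)} I(x) = \inf_{y \in A} I'(y)$. The only step requiring a moment's care is showing the infimum defining $I'$ is attained on sublevel sets, which is what upgrades $I'$ from merely a rate function to a good one; everything else is bookkeeping with infima and preimages.
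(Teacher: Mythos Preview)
Your argument is correct and is the standard proof of the contraction principle. Note, however, that the paper does not actually prove this proposition: it is stated without proof in the preliminaries as a well-known result from the large deviations literature (the authors follow \citet{Dembo}, where this is Theorem~4.2.1). So there is no ``paper's own proof'' to compare against; your write-up simply supplies the omitted standard argument, and every step --- the identification $\Psi_{I'}(C) = f(\Psi_I(C))$ via compactness and lower semicontinuity, and the transfer of the two bounds through the identity $\inf_{x \in f^{-1}(A)} I(x) = \inf_{y \in A} I'(y)$ --- is correct.
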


\begin{prop}\label{prop:ldp von grober auf feine topo}
   Let $(\mu_\varepsilon)_{\varepsilon > 0}$ be an exponentially tight family of probability
   measures on $(X, \B_{\tau_2})$ where $\B_{\tau_2}$ are the Borel sets of $\tau_2$.
   Assume $(\mu_\varepsilon)$ satisfies an LDP with rate function $I$ with respect to some
   Hausdorff topology $\tau_1$ on $X$ which is coarser than $\tau_2$, i.e. $\tau_2 \subset \tau_1$.
   Then $(\mu_\varepsilon)_{\varepsilon > 0}$ satisfies the LDP with respect to $\tau_2$, with good rate
   function $I$.
\end{prop}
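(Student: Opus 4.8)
Conceptually this is the inverse contraction principle applied to the identity map $\iota\colon(X,\tau_2)\to(X,\tau_1)$, which is continuous precisely because $\tau_1$ is coarser than $\tau_2$; since only the direct contraction principle is at our disposal (Proposition \ref{prop:contraction principle}), the plan is to argue by hand. The whole argument will run on one observation: $\tau_1$ and $\tau_2$ induce the same subspace topology on every $\tau_2$-compact set $K$. Indeed $K$ is then $\tau_1$-compact (continuous image of a compact set) and, $\tau_1$ being Hausdorff, $\tau_1$-closed; the restriction $\iota|_K$ is thus a continuous bijection from the compact space $(K,\tau_2|_K)$ onto the Hausdorff space $(K,\tau_1|_K)$, hence a homeomorphism. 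A cheap by-product, obtained directly from the inclusion of topologies: the level sets $\Psi_I(C)$ are $\tau_1$-closed (lower semicontinuity of $I$ for $\tau_1$), hence $\tau_2$-closed, so $I$ is at least a rate function for $\tau_2$.

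Next I would use exponential tightness to locate the level sets inside the compact sets it produces. For $a>0$ pick a $\tau_2$-compact $K_a$ with $\limsup_{\varepsilon\to0}\varepsilon\log\mu_\varepsilon(K_a^c)<-a$. Since $K_a^c$ is $\tau_1$-open, the $\tau_1$-LDP lower bound gives $-\inf_{K_a^c}I\le\liminf_{\varepsilon\to0}\varepsilon\log\mu_\varepsilon(K_a^c)$, which together with the tightness bound forces $\inf_{K_a^c}I>a$, i.e.\ $\Psi_I(a)\subseteq K_a$. Hence for $a\ge C$ the set $\Psi_I(C)$ is a $\tau_2$-closed subset of the $\tau_2$-compact set $K_a$, so $\Psi_I(C)$ is $\tau_2$-compact and $I$ is a \emph{good} rate function for $\tau_2$.

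The upper bound is then routine. For $\tau_2$-closed $F$ and $a>0$, the set $F\cap K_a$ is $\tau_2$-compact, hence $\tau_1$-closed, so the $\tau_1$-LDP upper bound gives $\limsup_{\varepsilon\to0}\varepsilon\log\mu_\varepsilon(F\cap K_a)\le-I(F\cap K_a)\le-I(F)$; splitting $\mu_\varepsilon(F)\le\mu_\varepsilon(F\cap K_a)+\mu_\varepsilon(K_a^c)$, using the standard rule for the $\limsup$ of a sum, and then letting $a\to\infty$ gives $\limsup_{\varepsilon\to0}\varepsilon\log\mu_\varepsilon(F)\le-I(F)$.

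The main obstacle is the lower bound: a $\tau_2$-open set need not contain any $\tau_1$-open neighbourhood of a given point, so the $\tau_1$-LDP lower bound does not apply directly. The remedy will be to route the estimate through a $\tau_2$-compact set, where the two topologies coincide. Let $G$ be $\tau_2$-open and $x_0\in G$ with $I(x_0)=C<\infty$ (if there is no such $x_0$ the bound is vacuous); fix $a>C$. Then $x_0\in\Psi_I(C)\subseteq\Psi_I(a)\subseteq K_a$, and because $\tau_1|_{K_a}=\tau_2|_{K_a}$ the relatively $\tau_2$-open set $G\cap K_a$ is relatively $\tau_1$-open, say $G\cap K_a=V\cap K_a$ with $V\in\tau_1$ and $x_0\in V$. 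Now $\mu_\varepsilon(G)\ge\mu_\varepsilon(V\cap K_a)\ge\mu_\varepsilon(V)-\mu_\varepsilon(K_a^c)$; the $\tau_1$-LDP lower bound gives $\liminf_{\varepsilon\to0}\varepsilon\log\mu_\varepsilon(V)\ge-\inf_VI\ge-C$, whereas $\limsup_{\varepsilon\to0}\varepsilon\log\mu_\varepsilon(K_a^c)<-a<-C$, so $\mu_\varepsilon(K_a^c)$ is exponentially negligible against $\mu_\varepsilon(V)$ as $\varepsilon\to0$ and therefore $\liminf_{\varepsilon\to0}\varepsilon\log\mu_\varepsilon(G)\ge\liminf_{\varepsilon\to0}\varepsilon\log\mu_\varepsilon(V)\ge-C$. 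Taking the supremum over all admissible $x_0\in G$ yields $\liminf_{\varepsilon\to0}\varepsilon\log\mu_\varepsilon(G)\ge-I(G)$. Combined with the upper bound and the goodness of $I$, this is the asserted LDP for $(\mu_\varepsilon)$ with respect to $\tau_2$.
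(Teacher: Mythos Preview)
The paper does not prove this proposition: it is listed among the preliminary facts from large deviation theory, with the reader referred to \cite{Dembo} (where it appears as Corollary~4.2.6). Your argument is correct and is essentially the standard proof of this ``inverse contraction principle'': the key observation that $\tau_1$ and $\tau_2$ agree on $\tau_2$-compact sets (a continuous bijection from a compact space to a Hausdorff space is a homeomorphism) is exactly what drives the textbook proof, and your handling of the lower bound---routing through a $\tau_1$-open $V$ with $V\cap K_a=G\cap K_a$ and then showing the $K_a^c$ contribution is exponentially negligible---is the right way to overcome the obstacle you identified. One minor remark: the proposition as stated in the paper contains a typo (``$\tau_1$ coarser than $\tau_2$, i.e.\ $\tau_2\subset\tau_1$'' should read $\tau_1\subset\tau_2$); you have correctly interpreted the intended meaning.
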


The main idea of our sequence space approach to Schilder's Theorem for
Hilbert space valued Wiener processes will just extend the following large
deviation principle for a standard normal variable with values in $\R$ to
sequences of i.i.d. variables of this kind.

\begin{prop} \label{prop:ldp standard normal}
   Let $Z$ be a standard normal variable with values in $\R$,
   \begin{align*}
      I: \R \rightarrow [0,\infty), \, x \mapsto \frac{x^2}{2},
   \end{align*}
   and for Borel sets $B$ in $\R$ let $\mu_\varepsilon(B):= \P(\sqrt{\varepsilon} Z \in B)$.
   Then $(\mu_\varepsilon)_{\varepsilon > 0}$ satisfies a LDP with good rate function $I$.
\end{prop}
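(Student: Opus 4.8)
The plan is to prove this one-dimensional large deviation principle directly from the explicit Gaussian density, since for $X=\R$ there is no topological subtlety and everything reduces to classical estimates. First I would verify that $I(x)=x^2/2$ is a good rate function: it is continuous, hence lower semicontinuous, and its sublevel sets $\Psi_I(C)=\{x:x^2/2\le C\}=[-\sqrt{2C},\sqrt{2C}]$ are compact. Next I would establish the upper bound for closed sets $F\subset\R$. Writing out the Gaussian tail, for any $a>0$ one has $\P(\sqrt\varepsilon Z\ge a)=\P(Z\ge a/\sqrt\varepsilon)\le \exp(-a^2/(2\varepsilon))$, and similarly for the left tail, so $\mu_\varepsilon([a,\infty))$ and $\mu_\varepsilon((-\infty,-a])$ both decay like $e^{-a^2/(2\varepsilon)}$. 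For a general closed set $F$, if $0\in F$ then $I(F)=0$ and there is nothing to prove; otherwise let $d=\inf_{x\in F}|x|>0$, note $F\subset(-\infty,-d]\cup[d,\infty)$, and conclude $\limsup_{\varepsilon\to0}\varepsilon\log\mu_\varepsilon(F)\le -d^2/2=-I(F)$, using that $I(F)=d^2/2$ because $I$ is even and increasing in $|x|$.

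For the lower bound on an open set $G$, fix $x\in G$ with $I(x)<\infty$ (any real $x$); since $G$ is open there is $\delta>0$ with $(x-\delta,x+\delta)\subset G$. Then
\begin{align*}
   \mu_\varepsilon(G)\ge \P(\sqrt\varepsilon Z\in(x-\delta,x+\delta))=\frac{1}{\sqrt{2\pi\varepsilon}}\int_{x-\delta}^{x+\delta}e^{-y^2/(2\varepsilon)}\,dy\ge \frac{2\delta}{\sqrt{2\pi\varepsilon}}\,e^{-(|x|+\delta)^2/(2\varepsilon)},
\end{align*}
so that $\liminf_{\varepsilon\to0}\varepsilon\log\mu_\varepsilon(G)\ge -(|x|+\delta)^2/2$. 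Letting $\delta\downarrow0$ gives $\ge -x^2/2=-I(x)$, and taking the supremum over $x\in G$ yields $\liminf_{\varepsilon\to0}\varepsilon\log\mu_\varepsilon(G)\ge -I(G)$.

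Alternatively, and more in the spirit of citing rather than recomputing, one may simply invoke Cram\'er's theorem on $\R$: the laws of $\frac1n\sum_{i=1}^n Z_i$ for i.i.d.\ standard normals satisfy an LDP at speed $n$ with rate $\Lambda^*(x)=\sup_\lambda(\lambda x-\lambda^2/2)=x^2/2$, and a change of variables $\varepsilon=1/n$ together with the fact that $\frac1n\sum_{i=1}^n Z_i$ is itself $\mathcal N(0,1/n)$-distributed, i.e.\ equal in law to $\sqrt\varepsilon Z$, transfers this to the stated family $(\mu_\varepsilon)$. I do not expect any real obstacle here: the only mild point of care is making sure the upper bound is phrased so that it holds for all closed sets uniformly (handling the case $0\in F$ separately), and that the density lower bound in the open-set estimate is kept crude enough that the polynomial prefactor $1/\sqrt{2\pi\varepsilon}$ washes out after multiplying by $\varepsilon$ and taking logarithms.
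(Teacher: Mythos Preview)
Your proof is correct. The paper itself does not prove this proposition at all: it is stated without proof as a standard background fact from large deviations theory, so there is no ``paper's approach'' to compare against. Your direct argument via the explicit Gaussian tail bound $\P(Z\ge t)\le e^{-t^2/2}$ for the upper bound and the crude density estimate on a small interval for the lower bound is exactly the kind of elementary verification one would supply if pressed, and all the steps check out (including the observation that $I(F)=d^2/2$ when $d=\mathrm{dist}(0,F)>0$, which uses closedness of $F$). The Cram\'er alternative is also fine, with the minor caveat you anticipate: Cram\'er gives the LDP along the discrete sequence $\varepsilon=1/n$, and one needs a one-line monotonicity or interpolation remark to pass to continuous $\varepsilon\to 0$; your direct density computation avoids this entirely.
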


\section{Large Deviations for Hilbert Space Valued Wiener Processes}\label{LDP}

Ciesielski's isomorphism and the Schauder representation of Brownian motion
yield a very elegant and simple method of proving large deviation
principles for the Brownian motion. This was first noticed by \citet{Baldi}
who gave an alternative proof of Schilder's theorem based on this
isomorphism. We follow their approach and extend it to Wiener processes
with values on Hilbert spaces. In this entire section we always assume $0 <
\alpha < 1/2$. By further decomposing the orthogonal 1-dimensional Brownian
motions in the representation of an $H$-valued Wiener process by its
Fourier coefficients with respect to the Schauder functions, we describe it
by double sequences of real-valued normal variables.

\subsection{Appropriate norms}

We work with new norms on the spaces of $\alpha$-H\"older continuous
functions given by
\begin{align*}
   &\lVert F\rVert^{'}_\alpha := \lVert T^H_\alpha F\rVert_\infty = \sup_{k,n} \left| c_n(\alpha) \int_{[0,1]} \chi_n(s)
   d\langle  F, e_k \rangle(s) \right|, F \in C_\alpha^0([0,1];H), \\
   & \lVert f\rVert^{'}_\alpha := \lVert T_\alpha f\rVert_\infty = \sup_{n} \left| c_n(\alpha) \int_{[0,1]} \chi_n(s)df(s)
   \right|, f \in C_\alpha^0([0,1];\R).
\end{align*}
Since $T^{H}_\alpha$ is one-to-one, $\lVert.\rVert^{'}_\alpha$ is indeed a norm.
Also, we have $\lVert.\rVert_\alpha^{'} \le \lVert.\rVert_\alpha$. Hence the topology
generated by $\lVert.\rVert_\alpha^{'}$
is coarser than the usual topology on $C_\alpha^0([0,1],H)$.\\

Balls with respect to the new norms $U_\alpha^\delta(F) := \{ G \in
C_\alpha^0([0,1];H): \lVert G - F\rVert_\alpha^{'} < \delta\}$ for $F\in C_\alpha^0([0,1];H), \delta>0$, have a
simpler form for our reasoning, since the condition that for $\delta > 0$ a
function $G\in C^0_\alpha([0,1],H)$ lies in $U_\alpha^\delta(F)$ translates into
the countable set of one-dimensional conditions $|\langle
T^H_\alpha(F)_n-T^H_\alpha(G)_n, e_k\rangle|<\delta$ for all $n,k\ge 0.$
This will facilitate the proof of the LDP for the basis of open balls of
the topology generated by $\lVert.\rVert_\alpha^{'}$. We will first prove the LDP
in the topologies generated by these norms and then transfer the result to
the finer sequence space topologies using Proposition \ref{prop:ldp von
grober auf feine topo}, and finally to the original function space using
Ciesielski's isomorphism and Proposition \ref{prop:contraction principle}.

\subsection{The rate function}
Recall that $Q$ is supposed to be a positive self-adjoint trace-class
operator on H. Let $H_0 :=( Q^{1/2} H, \lVert \cdot \rVert_0)$, equipped
with the inner product
\begin{align*}
   \langle  x,y \rangle_{H_0} := \langle  Q^{-1/2} x, Q^{-1/2} y \rangle_H,
\end{align*}
that induces the norm $\lVert \cdot\rVert_0$ on $H_0$. We define the
Cameron-Martin space of the $Q$-Wiener process $W$ by
\begin{align*}
   \H:= \left\{ F \in C([0,1];H): F(\cdot) = \int_0^\cdot U(s) ds \mbox{ for some } U \in L^2([0,1]; H_0)
   \right\}.
\end{align*}
Here $L^2([0,1];H_0)$ is the space of measurable functions $U$ from $[0,1]$
to $H_0$ such that $\int_0^1 \lVert U\rVert^2_{H_0} dx < \infty$. Define the
function $I$ via
\begin{align*}
    &I: C([0,1];H) \rightarrow [0, \infty] \\
    &F\mapsto \inf \left\{ \frac{1}{2}\int_0^1 \lVert U(s)\rVert^2_{H_0} ds: U \in L^2([0,1];H_0), F(\cdot) =
    \int_0^\cdot U(s) ds \right\}
\end{align*}
where by convention $\inf \emptyset = \infty$. In the following we will
denote any restriction of $I$ to a subspace of $C([0,1];H)$ (e.g. to
$(C_\alpha([0,1];H)$) by $I$ as well. We will use the structure of $H$ to
simplify our problem. It allows us to compute the rate function $I$ from
the rate function of the one dimensional Brownian by the following Lemma.
\begin{lem}\label{lem:gleichheit rate functions}

   Let $\tilde{I}: C([0,1];\R)$ be the rate function of the Brownian motion, i.e.
\begin{align*}
   \tilde{I}(f) :=  \left\{\begin{array}{ll} \int_0^1 |\dot{f}(s)|^2 ds, & f (\cdot) = \int_0^{\cdot} \dot{f}(s) ds\,\,\mbox{for a square integrable function}\,\, \dot{f},\\  \infty, & \mbox{otherwise}.\end{array}\right.
\end{align*}
   Let $(\lambda_k)_{k\ge 0}$ be the sequence of eigenvalues of $Q$. Then for all $F \in C([0,1];H)$ we have
   \begin{align*}
      I(F) = \sum_{k=0}^\infty \frac{1}{\lambda_k} \tilde{I}(\langle  F, e_k \rangle).
   \end{align*}
   where we convene that $c/0 = \infty$ for $c > 0$ and $0/0 = 0$.
\end{lem}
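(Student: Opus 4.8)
The plan is to reduce the identity to the one-dimensional case componentwise. First I would show the inequality $I(F) \ge \sum_k \lambda_k^{-1} \tilde I(\langle F, e_k\rangle)$. Suppose $I(F) < \infty$ (otherwise there is nothing to prove), so that $F(\cdot) = \int_0^\cdot U(s)\,ds$ for some $U \in L^2([0,1];H_0)$ achieving, say up to $\varepsilon$, the infimum $\tfrac12\int_0^1 \lVert U(s)\rVert_{H_0}^2\,ds$. Writing $U(s) = \sum_k U_k(s) e_k$ with $U_k = \langle U(s), e_k\rangle$, the fact that $U(s) \in H_0 = Q^{1/2}H$ means $U(s) = Q^{1/2} V(s)$ for some $V(s) \in H$, so $\lVert U(s)\rVert_{H_0}^2 = \lVert V(s)\rVert_H^2 = \sum_k \langle V(s), e_k\rangle^2 = \sum_k \lambda_k^{-1} U_k(s)^2$ (using $U_k(s) = \sqrt{\lambda_k}\langle V(s), e_k\rangle$ and the convention $c/0 = \infty$, which forces $U_k \equiv 0$ whenever $\lambda_k = 0$). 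Since $\langle F, e_k\rangle(\cdot) = \int_0^\cdot U_k(s)\,ds$ and $U_k \in L^2([0,1];\R)$, we get $\tilde I(\langle F, e_k\rangle) \le \int_0^1 U_k(s)^2\,ds$, hence by Tonelli
\begin{align*}
 \tfrac12\int_0^1 \lVert U(s)\rVert_{H_0}^2\,ds = \tfrac12\sum_{k=0}^\infty \lambda_k^{-1}\int_0^1 U_k(s)^2\,ds \ge \sum_{k=0}^\infty \tfrac12\lambda_k^{-1}\tilde I(\langle F, e_k\rangle),
\end{align*}
and letting the $\varepsilon$ in the near-optimal choice of $U$ tend to $0$ gives the desired inequality.

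For the reverse inequality I would argue that, up to a harmless factor of $2$ hidden in the definitions of $I$ and $\tilde I$ (I will keep track of it explicitly in the final write-up), assume the right-hand side is finite. Then for each $k$ with $\lambda_k > 0$ we have $\langle F, e_k\rangle(\cdot) = \int_0^\cdot \dot f_k(s)\,ds$ for some square integrable $\dot f_k$ with $\int_0^1 \dot f_k^2 = \tilde I(\langle F, e_k\rangle)$, while for $\lambda_k = 0$ finiteness forces $\tilde I(\langle F, e_k\rangle) = 0$, i.e.\ $\langle F, e_k\rangle \equiv 0$ and we set $\dot f_k \equiv 0$. Define $U(s) := \sum_k \dot f_k(s) e_k$. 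The hypothesis $\sum_k \lambda_k^{-1}\int_0^1 \dot f_k^2 < \infty$ says precisely that $\int_0^1 \sum_k \lambda_k^{-1}\dot f_k(s)^2\,ds < \infty$, so for a.e.\ $s$ the vector $(\lambda_k^{-1/2}\dot f_k(s))_k$ is in $\ell^2$, hence $U(s) = Q^{1/2}\bigl(\sum_k \lambda_k^{-1/2}\dot f_k(s) e_k\bigr) \in Q^{1/2}H = H_0$ with $\lVert U(s)\rVert_{H_0}^2 = \sum_k \lambda_k^{-1}\dot f_k(s)^2$, so $U \in L^2([0,1];H_0)$. It remains to check that $F(\cdot) = \int_0^\cdot U(s)\,ds$; this follows because the partial sums $\sum_{k\le K}\langle F, e_k\rangle e_k$ converge to $F$ (as $F$ takes values in $H$ and is continuous, with a uniform-in-$t$ argument via the $L^2$ bound on the tails) and $\int_0^\cdot \dot f_k(s)\,ds = \langle F, e_k\rangle(\cdot)$ for each $k$. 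Hence $I(F) \le \tfrac12\int_0^1\lVert U(s)\rVert_{H_0}^2\,ds = \sum_k \tfrac12\lambda_k^{-1}\tilde I(\langle F, e_k\rangle)$.

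The main obstacle is the bookkeeping around the null directions ($\lambda_k = 0$) together with making the interchange of the $k$-sum and the $s$-integral fully rigorous: one must check that a measurable $H_0$-valued representative $U$ genuinely exists (measurability of $s\mapsto U(s)$ as an $H_0$-valued map, not merely componentwise), and that the identity $F(\cdot)=\int_0^\cdot U(s)\,ds$ holds as an $H$-valued Bochner integral, which requires controlling the convergence $\sum_{k\le K}\langle F, e_k\rangle e_k \to F$ uniformly on $[0,1]$ rather than just pointwise. Everything else is an application of Tonelli's theorem and the elementary relation $\lVert Q^{1/2}v\rVert_{H_0} = \lVert R_0 v\rVert_H$ (projection onto the closure of the range), combined with the corresponding one-dimensional facts about $\tilde I$.
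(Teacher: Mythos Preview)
Your plan is correct and follows essentially the same route as the paper: decompose $U$ (respectively $F$) along the eigenbasis $(e_k)$, use the identity $\lVert U(s)\rVert_{H_0}^2=\sum_k \lambda_k^{-1}\langle U(s),e_k\rangle^2$, and swap sum and integral via monotone convergence/Tonelli in each direction. The paper dispatches your ``main obstacles'' quite briefly --- dominated convergence for $F(t)=\sum_k\langle F(t),e_k\rangle e_k=\int_0^t U(s)\,ds$ at each fixed $t$ (uniform convergence in $t$ is not needed), and monotone convergence for the sum--integral interchange --- so you need not work as hard as you fear on measurability or uniform control. Your observation about the factor of $2$ is also apt: the paper's definitions of $I$ and $\tilde I$ carry an inconsistent normalization, but the argument is unaffected.
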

\begin{proof}
Let $F\in C([0,1];H)$.
   \begin{enumerate}
      \item First assume $I(F) < \infty$. Then there exists $U \in L^2([0,1]; H_0)$ such
      that $F = \int_0^\cdot U(s) ds$ and thus $\langle F,e_k\rangle=\int_0^\cdot \langle U(s), e_k
      \rangle ds$ for $k\ge 0$.
      Consequently we have by monotone convergence
\begin{align*}
  \frac{1}{2}\int_0^1 \lVert U(s)\rVert^2_{H_0} ds &= \frac{1}{2} \int_0^1 \left\lVert\sum_{k=0}^\infty  \langle  U(s), e_k \rangle e_k\right\rVert_{H_0} ds\\
&=\frac{1}{2} \int_0^1 \sum_{k=0}^\infty \langle  U(s), e_k \rangle^2 \langle Q^{-\frac{1}{2}} e_k, Q^{-\frac{1}{2}}e_k\rangle ds\\
&=\frac{1}{2} \int_0^1 \sum_{k=0}^\infty  \frac{1}{\lambda_k} \langle  U(s), e_k \rangle^2 ds\\
& = \sum_{k=0}^\infty \frac{1}{\lambda_k}\tilde{I}(\langle  F, e_k
\rangle).
\end{align*}

The last expression does not depend on the choice of $U$. Hence we get that
$I(F) < \infty$ implies $I(F)=\sum_{k=0}^\infty \frac{1}{\lambda_k}
\tilde{I}(\langle  F, e_k \rangle)$.

\item Conversely assume $\sum_{k=0}^\infty \frac{1}{\lambda_k} \tilde{I}(\langle  F, e_k \rangle) < \infty$.
Since $\tilde{I}(\langle  F, e_k \rangle) < \infty$ for all $k\ge 0$, we
know that there exists a sequence $(U_k)_{k\ge 0}$ of square-integrable
real-valued functions such that $\langle  F, e_k \rangle = \int_0^\cdot
U_k(s) ds$. Further, those functions $U_k$ satisfy by monotone convergence
\begin{align*}
  \int_0^1\sum_{k=0}^\infty \frac{1}{\lambda_k} |U_k(s)|^2 ds = \sum_{k=0}^\infty \frac{1}{\lambda_k}
  \int_0^1 |U_k(s)|^2 ds = \sum_{k=0}^\infty \frac{2}{\lambda_k} \tilde{I}(\langle  F, e_k \rangle)<
  \infty.
\end{align*}
 So if we define $U(s):= \sum_{k=0}^\infty U_k(s) e_k, s\in[0,1]$, then $U \in L^2([0,1];H_0)$. This follows from
\begin{align*}
 U\in  L^2([0,1];H_0) &\text{ iff }
 \int_0^1 \lVert U(s)\rVert^2_{H_0} ds = \int_0^1\sum_{k=0}^\infty \frac{1}{\lambda_k} |U_k(s)|^2 ds< \infty.
\end{align*}
Finally we obtain by dominated convergence ($\lVert F(t)\rVert_H<\infty$)
\begin{align*}
 F(t) = \sum_{k=0}^\infty \langle  F(t), e_k \rangle e_k = \sum_{k=0}^\infty e_k \int_0^t U_k(s) ds =
 \int_0^t U(s) ds,
\end{align*}
such that
\begin{align*}
 I(F) \le \frac{1}{2}\int_0^1 \lVert U(s)\rVert^2_{H_0} ds = \frac{1}{2}\int_0^1 \sum_{k=0}^\infty
 \frac{1}{\lambda_k} |U_k(s)|^2 ds < \infty.
\end{align*}
\end{enumerate}

Combining the two steps we obtain $I(F)<\infty$ iff $\sum_{k=0}^\infty
\frac{1}{\lambda_k} \tilde{I}(\langle  F, e_k \rangle) < \infty$ and in
this case
\begin{align*}
 I(F) = \sum_{k=0}^\infty \frac{1}{\lambda_k} \tilde{I}(\langle  F, e_k \rangle).
\end{align*}
This completes the proof.
\end{proof}
Lemma \ref{lem:gleichheit rate functions} allows us to show that $I$ is a rate function.
\begin{lem}\label{lem: rate function}
   $I$ is a rate function on $(C_\alpha^0([0,1];H), \lVert.\lVert_\alpha^{'})$.
\end{lem}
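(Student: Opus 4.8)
The plan is to verify the two defining properties of a rate function: $I$ takes values in $[0,\infty]$, which is immediate from the definition, and $I$ is lower semi-continuous on $(C_\alpha^0([0,1];H), \lVert\cdot\rVert_\alpha')$, i.e.\ for every $C\ge 0$ the sublevel set $\Psi_I(C) = \{F : I(F) \le C\}$ is closed in the $\lVert\cdot\rVert_\alpha'$-topology. Since this topology is metrizable (the norm $\lVert\cdot\rVert_\alpha'$ is just the pullback of the sup-norm on $\mathcal{C}_0^H$ under the isomorphism $T^H_\alpha$), it suffices to show that $\Psi_I(C)$ is sequentially closed: if $F^{(j)} \to F$ in $\lVert\cdot\rVert_\alpha'$ with $I(F^{(j)}) \le C$ for all $j$, then $I(F) \le C$.

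The key step is to exploit Lemma \ref{lem:gleichheit rate functions}, which expresses $I(F) = \sum_{k=0}^\infty \frac{1}{\lambda_k}\tilde I(\langle F, e_k\rangle)$ as a sum of one-dimensional contributions. First I would observe that $\lVert\cdot\rVert_\alpha'$-convergence $F^{(j)}\to F$ implies, for each fixed $k$, that $\langle F^{(j)}, e_k\rangle \to \langle F, e_k\rangle$ in the scalar norm $\lVert\cdot\rVert_\alpha'$ on $C_\alpha^0([0,1];\R)$; indeed $\lVert\langle G, e_k\rangle\rVert_\alpha' = \sup_n |c_n(\alpha)\int\chi_n\,d\langle G,e_k\rangle| \le \lVert G\rVert_\alpha'$ directly from the definitions. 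Convergence in $\lVert\cdot\rVert_\alpha'$ in particular forces pointwise convergence of the functions (since $\lVert\cdot\rVert_\alpha'$ controls the Schauder coefficients, hence via $(T^H_\alpha)^{-1}$ the values). Then the classical lower semi-continuity of the one-dimensional rate function $\tilde I$ — standard, e.g.\ because $\tilde I(f) = \sup \sum_i \frac{|f(t_{i+1})-f(t_i)|^2}{t_{i+1}-t_i}$ over partitions, a supremum of continuous functionals, hence l.s.c.\ even for pointwise convergence — gives $\tilde I(\langle F, e_k\rangle) \le \liminf_j \tilde I(\langle F^{(j)}, e_k\rangle)$ for each $k$. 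Finally, applying Fatou's lemma to the counting measure on $\N$ weighted by $1/\lambda_k$,
\begin{align*}
 I(F) = \sum_{k=0}^\infty \frac{1}{\lambda_k}\tilde I(\langle F, e_k\rangle)
 \le \sum_{k=0}^\infty \frac{1}{\lambda_k}\liminf_{j\to\infty}\tilde I(\langle F^{(j)}, e_k\rangle)
 \le \liminf_{j\to\infty}\sum_{k=0}^\infty \frac{1}{\lambda_k}\tilde I(\langle F^{(j)}, e_k\rangle)
 = \liminf_{j\to\infty} I(F^{(j)}) \le C,
\end{align*}
so $F \in \Psi_I(C)$, as required.

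I would also need the small technical point that $\tilde I$ itself is a rate function on $C_\alpha^0([0,1];\R)$ with its $\lVert\cdot\rVert_\alpha'$-topology — or at least l.s.c.\ with respect to the convergence at hand — which is where the partition-sum representation of $\tilde I$ is convenient, since that representation makes l.s.c.\ transparent under pointwise (hence a fortiori $\lVert\cdot\rVert_\alpha'$-) convergence without needing to know anything delicate about the topology. The main obstacle is purely bookkeeping: making sure that $\lVert\cdot\rVert_\alpha'$-convergence really does pass to the scalar components and to pointwise values so that the one-dimensional l.s.c.\ applies, and then justifying the interchange of $\liminf$ and the infinite sum via Fatou. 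There is no deep difficulty here; the content is entirely carried by Lemma \ref{lem:gleichheit rate functions} together with the elementary l.s.c.\ of $\tilde I$.
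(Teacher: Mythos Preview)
Your proposal is correct and follows essentially the same route as the paper: reduce to the componentwise decomposition $I(F)=\sum_k \lambda_k^{-1}\tilde I(\langle F,e_k\rangle)$ from Lemma~\ref{lem:gleichheit rate functions}, observe that $\lVert\cdot\rVert_\alpha'$-convergence passes to each scalar component, invoke lower semi-continuity of $\tilde I$ on $(C_\alpha^0([0,1];\R),\lVert\cdot\rVert_\alpha')$, and finish with Fatou. The only cosmetic difference is that the paper cites \citet{Baldi} for the l.s.c.\ of $\tilde I$, whereas you sketch an independent argument via the partition-sum representation; either is fine.
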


\begin{proof}
    For a constant $C\ge 0$ we have to prove that if $(F_n)_{n\ge 0} \subset \Psi_I(C) \cap C_\alpha^0([0,1];H)$ converges
   in $C_\alpha^0([0,1];H)$ to $F$, then $F$ is also in $\Psi_I(C)$.

   It was observed in \citet{Baldi} that $\tilde{I}$ is a rate function for the $\lVert . \rVert_\alpha^{'}$-topology
   on $C_0^\alpha([0,1|;\R)$. By our assumption we know that for every $k\in\mathbb{N}$, $(\langle  F_n, e_k
   \rangle)_{n\ge 0}$ converges in $(C_0^\alpha([0,1|;\R),\lVert .\rVert_\alpha^{'})$ to
   $\langle  F, e_k \rangle$. Therefore
   \begin{align*}
      \tilde{I} (\langle  F, e_k \rangle) \le \liminf_{n \rightarrow \infty} \tilde{I}(\langle  F_n,
      e_k \rangle),
   \end{align*}
   so by Lemma \ref{lem:gleichheit rate functions} and by Fatou's lemma
   \begin{align*}
      C &\ge \liminf_{n \rightarrow \infty} I(F_n) =  \liminf_{n \rightarrow \infty} \sum_{k=0}^\infty
      \frac{1}{\lambda_k} \tilde{I}(\langle  F_n, e_k \rangle) \ge \sum_{k=0}^\infty  \frac{1}{\lambda_k}
      \liminf_{n \rightarrow \infty} \tilde{I}(\langle  F_n, e_k \rangle) \\
      & \ge \sum_{k=0}^\infty \frac{1}{\lambda_k} \tilde{I} (\langle  F, e_k\rangle) = I(F).
   \end{align*}
   Hence $F \in \Psi_I (C)$.
\end{proof}

\subsection{LDP for a sub-basis of the coarse topology}
To show that the $Q$-Wiener process $(W(t): t \in [0,1])$ satisfies a LDP
on $(C_\alpha([0,1];H), \lVert .\rVert_\alpha)$ with good rate function $I$ as
defined in the last section we now show that the LDP holds for open balls
in our coarse topology induced
by $\lVert.\rVert_\alpha^{'}$. The proof is an extension of the version of \citet{Baldi} for the real valued Wiener process.

For $\varepsilon>0$ denote by $\mu_\varepsilon$ the law of $\sqrt{\varepsilon} W$, i.e.
$\mu_\varepsilon(A) = \P( \sqrt{\varepsilon}W \in A)$,
$A\in\B(H)$.

\begin{lem}\label{lem: ldp for basis}
   For every $\delta > 0$ and every $F \in C_\alpha^0([0,1]; H)$ we have
   \begin{align*}
      \lim_{\varepsilon \rightarrow 0} \varepsilon \log \mu_\varepsilon (U^\delta_\alpha(F))
      = - \inf_{G \in U^\delta_\alpha(F)} I(G).
   \end{align*}
\end{lem}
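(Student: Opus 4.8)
The key is that the ball $U_\alpha^\delta(F)$ is, under the isomorphism $T^H_\alpha$, a product set: writing $\eta = T^H_\alpha(F)$ with coordinates $\eta_{n,k} = \langle T^H_\alpha(F)_n, e_k\rangle = c_n(\alpha)\int \chi_n\,d\langle F,e_k\rangle$, and recalling from Proposition \ref{lem:representation of Wienerprocess} and Proposition \ref{prop: series representation of Wienerprocess} that $T^H_\alpha(\sqrt{\varepsilon}W) = (c_n(\alpha)\sqrt{\varepsilon}\sqrt{\lambda_k} N_{n,k})_{n,k}$ with $(N_{n,k})$ i.i.d.\ standard normal, the condition $\sqrt{\varepsilon}W \in U_\alpha^\delta(F)$ is exactly
$$
\sup_{n,k} \bigl| c_n(\alpha)\sqrt{\varepsilon}\sqrt{\lambda_k}\, N_{n,k} - \eta_{n,k}\bigr| < \delta,
$$
i.e.\ a countable intersection of the one-dimensional events $A_{n,k} := \{|\sqrt{\varepsilon}\,\sigma_{n,k}N_{n,k} - \eta_{n,k}| < \delta\}$ where $\sigma_{n,k} := c_n(\alpha)\sqrt{\lambda_k}$. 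Since the $N_{n,k}$ are independent, $\mu_\varepsilon(U_\alpha^\delta(F)) = \prod_{n,k}\P(A_{n,k})$.

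\textbf{Upper bound.} For any finite set $S$ of indices, $\mu_\varepsilon(U_\alpha^\delta(F)) \le \prod_{(n,k)\in S}\P(A_{n,k})$. Each factor is a one-dimensional Gaussian probability of an interval; by Proposition \ref{prop:ldp standard normal} (or a direct Laplace estimate), $\limsup_{\varepsilon\to 0}\varepsilon\log\P(A_{n,k}) \le -\inf\{x^2/2 : |\sigma_{n,k}x - \eta_{n,k}| < \delta\} = -\tfrac{1}{2\sigma_{n,k}^2}(|\eta_{n,k}|-\delta)_+^2$ (interpreted as $0$ if $\sigma_{n,k}=0$ and $\eta_{n,k}=0$, and as requiring $\eta_{n,k}=0$ otherwise). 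Summing over $(n,k)\in S$ and then taking the supremum over finite $S$ gives
$$
\limsup_{\varepsilon\to 0}\varepsilon\log\mu_\varepsilon(U_\alpha^\delta(F)) \le -\sum_{n,k}\frac{1}{2\sigma_{n,k}^2}\bigl(|\eta_{n,k}|-\delta\bigr)_+^2.
$$
It then remains to identify the right-hand side with $-\inf_{G\in U_\alpha^\delta(F)}I(G)$. Using Lemma \ref{lem:gleichheit rate functions}, $I(G) = \sum_k \tfrac{1}{\lambda_k}\tilde I(\langle G,e_k\rangle)$, and the fact (from \citet{Baldi}, or by a direct Parseval/Ciesielski computation) that for a real function $g$ with $T_\alpha$-coordinates $(g_n)$ one has $\tilde I(g) = \sum_n \tfrac{2}{c_n(\alpha)^2}$-type expressions—more precisely that minimising $\tilde I$ over $\{g : \sup_n |c_n(\alpha)(\cdot) - g_n|<\delta\}$ decouples coordinate-wise—one checks that $\inf_{G\in U_\alpha^\delta(F)}I(G)$ is exactly the claimed sum. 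Here the coordinatewise decoupling of $\tilde I$ under the Schauder/Ciesielski coordinates, already exploited by \citet{Baldi}, is the technical ingredient to invoke.

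\textbf{Lower bound.} This is the step I expect to be the main obstacle, because one cannot simply write $\mu_\varepsilon(U_\alpha^\delta(F))$ as a finite product. The standard device is to split: fix a large finite index set $S$ and estimate
$$
\mu_\varepsilon(U_\alpha^\delta(F)) \ge \prod_{(n,k)\in S}\P(A_{n,k}) \cdot \prod_{(n,k)\notin S}\P(A_{n,k}),
$$
handle the finite product by the lower bound in Proposition \ref{prop:ldp standard normal} (giving $\liminf \varepsilon\log \ge -\sum_{(n,k)\in S}\tfrac{1}{2\sigma_{n,k}^2}(|\eta_{n,k}|-\delta)_+^2 - o(1)$ once $S$ is large, since the tail of the target sum is small), and show the tail product $\prod_{(n,k)\notin S}\P(A_{n,k})$ is bounded below by something whose $\varepsilon\log$ tends to $0$. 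For the tail, since $\sigma_{n,k} = c_n(\alpha)\sqrt{\lambda_k} \to 0$ (exponential decay of $c_n(\alpha)$ and summability of $\lambda_k$) while $\eta_{n,k}\to 0$ as well (because $\eta\in\mathcal C_0^H$), for $(n,k)$ outside a large finite set we have $|\eta_{n,k}| < \delta/2$, hence $A_{n,k} \supseteq \{|\sqrt\varepsilon\sigma_{n,k}N_{n,k}| < \delta/2\}$, and $\P(|\sqrt\varepsilon\sigma_{n,k}N_{n,k}|<\delta/2) \ge \P(|N_{n,k}| < \delta/(2\sqrt\varepsilon\,\bar\sigma))$ where $\bar\sigma := \sup_{(n,k)\notin S}\sigma_{n,k}$; estimating $1 - \P$ by a Gaussian tail bound $\le 2\exp(-\delta^2/(8\varepsilon\bar\sigma^2))$ and using $\log\prod(1-p_{n,k}) \ge -\sum p_{n,k}/(1-p_{n,k})$ shows the tail contributes $\varepsilon\log(\cdots)\to 0$ provided $\bar\sigma$ is small enough — which we can arrange by enlarging $S$. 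Combining the two bounds and then letting $S$ exhaust all indices yields the matching lower bound, completing the proof.
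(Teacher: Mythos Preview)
Your overall strategy coincides with the paper's: factor $\mu_\varepsilon(U_\alpha^\delta(F))$ as an infinite product of one--dimensional Gaussian probabilities via Ciesielski, treat the finitely many indices with $|\eta_{n,k}|\ge\delta$ through the scalar LDP (Proposition~\ref{prop:ldp standard normal}), show that the remaining infinite tail product contributes nothing at the exponential scale, and identify the resulting sum with $\inf_{G\in U_\alpha^\delta(F)} I(G)$ through Lemma~\ref{lem:gleichheit rate functions} and Parseval for the Haar system. The paper organizes this via the explicit partition $\Lambda_1^k,\dots,\Lambda_4^k$, but the content is the same.

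There is, however, a genuine gap in your tail estimate for the lower bound. You bound each $p_{n,k}:=1-\P(A_{n,k})$ uniformly by $2\exp\bigl(-\delta^2/(8\varepsilon\bar\sigma^2)\bigr)$ with $\bar\sigma=\sup_{(n,k)\notin S}\sigma_{n,k}$, and then invoke $\log\prod(1-p_{n,k})\ge -\sum p_{n,k}/(1-p_{n,k})$. But the right--hand side is an \emph{infinite} sum of terms that are all at least a fixed positive number (for fixed $\varepsilon$), so the bound is $-\infty$ and says nothing. Making $\bar\sigma$ small by enlarging $S$ does not help: the complement of $S$ is still infinite. What is actually needed is the \emph{individual} bound
\[
   p_{n,k}\;\le\; 2\exp\Bigl(-\tfrac{\delta^2}{8\varepsilon\,\sigma_{n,k}^2}\Bigr),\qquad \sigma_{n,k}^2=c_n(\alpha)^2\lambda_k,
\]
followed by a proof that $\sum_{n,k}\exp\bigl(-\delta^2/(8\varepsilon c_n(\alpha)^2\lambda_k)\bigr)\to 0$ as $\varepsilon\to 0$. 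This summability is the nontrivial step; the paper handles it (its step~2, the $\Lambda_3^k$ part) via the elementary inequality $e^{-a}\le e^{-1}/a$ for $a\ge 1$, which reduces the double sum to a constant multiple of $\varepsilon\sum_{n}c_n(\alpha)^2\sum_{k}\lambda_k$, finite because $\alpha<1/2$ forces $\sum_n c_n(\alpha)^2<\infty$ and $Q$ is trace class. Once you replace the uniform $\bar\sigma$ bound by this argument, your proof goes through and matches the paper's.
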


\begin{proof}
1. Write $T^H_\alpha F = (\sum_{k=0}^\infty F_{n,k}e_k)_{n\in\N}$. Then $\sqrt{\varepsilon}W$ is
in $U^\delta_\alpha(F)$ if and only if
   \begin{align*}
      \sup_{k,n \ge 0} \left| \sqrt{\varepsilon} c_n(\alpha) \int_0^1 \chi_n d \langle  W,
      e_k\rangle -F_{k,n} \right| < \delta.
   \end{align*}
Now for $k\ge 0$ we recall $\langle  W, e_k \rangle = \sqrt{\lambda_k} \beta_k$, where $(\beta_k)_{k\ge 0}$ is a sequence of independent standard
Brownian motions. Therefore for $n,k\ge 0$
   \begin{align*}
      \left| \int_0^1 \chi_n d \langle  W, e_k\rangle \right| = \left| \sqrt{\lambda_k} Z_{k,n} \right| ,
   \end{align*}
   where $(Z_{k,n})_{k,n\ge 0}$ is a double sequence of independent standard normal variables. Therefore by independence
   \begin{align*}
      \mu_\varepsilon (U^\delta_\alpha(F)) & =  \P\left(\bigcap_{k,n \in \N_0}
      \left| c_n(\alpha) \sqrt{\varepsilon\lambda_k} Z_{k,n} -F_{k,n} \right| <\delta \right) \\
      & = \prod_{k=0}^\infty \prod_{n=0}^\infty \P\left( c_n(\alpha)
      \sqrt{\varepsilon\lambda_k} Z_{k,n} \in (F_{k,n} - \delta , F_{k,n} + \delta ) \right).
   \end{align*}
   To abbreviate, we introduce the notation
   \begin{align*}
       \P_{k,n} (\varepsilon) =  \P\left( c_n(\alpha) \sqrt{\varepsilon\lambda_k} Z_{k,n} \in
       (F_{k,n} - \delta , F_{k,n} + \delta ) \right)\text{, }\varepsilon >0, n,k\in\mathbb{N}_0.
   \end{align*}
   For every $k\ge 0$ we split $\N_0$ into subsets $\Lambda^k_i,$ $i=1,2,3,4$, for each of which we will
   calculate $\prod_{k =0}^\infty\prod_{n \in \Lambda^k_i} \P_{n,k} (\varepsilon)$ separately. Let
   \begin{align*}
      & \Lambda^k_1 = \{ n \ge 0: 0 \notin [ F_{k,n} - \delta , F_{k,n} + \delta ]\} \\
      & \Lambda^k_2 = \{ n \ge 0: F_{k,n} = \pm \delta \} \\
      & \Lambda^k_3 = \{ n \ge 0: [- \delta/2, \delta/2] \subset [ F_{k,n} - \delta ,
      F_{k,n} + \delta  ]\} \\
      & \Lambda^k_4 = ( \Lambda^k_1 \cup \Lambda^k_2\cup\Lambda^k_3)^c.
   \end{align*}
   By applying Ciesielski's isomorphism to the real-valued functions $\langle F, e_k\rangle$, we see that for every fixed $k$, $\Lambda_3^k$
   contains nearly all $n$. Since $(T_\alpha^H F)_n$ converges to zero in $H$, in particular $\sup_{k \ge 0} |F_{k,n}|$ converges to zero as $n \rightarrow \infty$. But for every fixed $n$, $(F_{k,n})_k$ is in $l^2$ and therefore converges to zero. This shows that for large enough $k$ we must have $\Lambda_3^k = \N_0$, and therefore $\cup_{k}(\Lambda^k_3)^c$ is finite.

2. First we examine $\prod_{k=0}^\infty \prod_{n \in \Lambda^k_3} \P_{k,n} (\varepsilon)$.
Note that for $n \in \Lambda_3^k$ we have
   \begin{align*}
       [- \delta/2, \delta/2] \subset [ F_{k,n} - \delta  , F_{k,n} + \delta   ],
   \end{align*}
   and therefore
   \begin{align*}
      \prod_{k=0}^\infty \prod_{n \in \Lambda^k_3} \P_{k,n}(\varepsilon) & \ge \prod_{k=0}^\infty
      \prod_{n \in \Lambda^k_3} \P\left(Z_{k,n} \in \left(-\frac{\delta}{ 2c_n(\alpha)
      \sqrt{\varepsilon\lambda_k} }, \frac{ \delta} {2c_n(\alpha) \sqrt{\varepsilon\lambda_k} }
      \right) \right) \\
      & = \prod_{k=0}^\infty  \prod_{n \in \Lambda^k_3} \left( 1 - \sqrt{\frac{2}{\pi}}
      \int_{\delta/(2c_n(\alpha)\sqrt{\varepsilon\lambda_k})}^\infty e^{-u^2/2} du \right).
   \end{align*}
   For $a > 1$ we have $\int_a^\infty e^{-x^2/2} dx \le e^{-a^2/2}$. Thus for small enough
   $\varepsilon$:
   \begin{align*}
      \prod_{k=0}^\infty \prod_{n \in \Lambda^k_3} \P_{k,n}(\varepsilon) \ge
      \prod_{k=0}^\infty \prod_{n \in \Lambda^k_3} \left( 1 - \sqrt{\frac{2}{\pi}}
      \exp\left( - \frac{\delta^2}{8 c_n^2(\alpha) \varepsilon \lambda_k} \right)\right).
   \end{align*}
   This amount will tend to $1$ if and only if its logarithm tends to 0 as $\varepsilon \rightarrow 0$. Since $\log(1-x) \leq - x$ for $x\in (0,1)$, it suffices to prove that
   \begin{align} \label{eq:lambda3}
      \lim_{\varepsilon \rightarrow 0} \sum_{k=0}^\infty \sum_{n \ge 0}
      \exp\left( - \frac{\delta ^2}{8 c_n^2(\alpha) \varepsilon \lambda_k} \right) = 0.
   \end{align}
This is true by dominated convergence, because $c_n(\alpha) = 2^{n(\alpha-1/2) + \alpha - 1}$,
and since $(\lambda_k) \in l_1$.

We will make this more precise.
First observe that for $a>0$
\begin{align*}
 e^{-a}\leq \frac{1}{a}e^{-1}\\
\mbox{if }\log(a)-a\leq -1.
\end{align*}
For $k,n\ge 0$ we write $\eta_{n,k}=\frac{\delta^2}{8 c_n^2(\alpha) \varepsilon
\lambda_k}$. Clearly there exists a finite set $T\subset \mathbb{N}_0^2$
such that $\log(\eta_{n,k})-\eta_{n,k}\leq -1$ for all $(n,k)\in T^c$. We
set $C=\sum_{(n,k)\in T}e^{-\eta_{n,k}}$ and get
\begin{align*}
 \sum_{k=0}^\infty \sum_{n= 0}^\infty  \exp\left( - \frac{\delta^2}{8 c_n^2(\alpha)
 \varepsilon \lambda_k} \right)&=C+\sum_{(n,k)\in T^c}^\infty e^{-\eta_{n,k}}\\
&\leq C+\sum_{(n,k)\in T^c}^\infty\frac{1}{\eta_{n,k}}e^{-1}\\
&\leq C+\frac{8\varepsilon e^{-1}}{\delta^2}\sum_{k\ge 0}
\lambda_k\sum_{n\ge 0}c_n(\alpha)^2 <\infty.
\end{align*}

3. Since $\cup_{k\ge 0}\Lambda_4^k$ is finite, and since for every $n$ in $\Lambda_4^k$
the interval $(\F_{k,n} - \delta, \F_{k,n} + \delta )$ contains a small
neighborhood of $0$, we have
   \begin{align} \label{eq:lambda4}
     \lim_{\varepsilon \rightarrow 0}\prod_{k=0}^\infty
     \prod_{n \in \Lambda^k_4} \P_{k,n}(\varepsilon) = 1.
   \end{align}

4. Again because $\cup_{k\ge 0}\Lambda_2^k$ is finite, we obtain from its definition that
   \begin{align}\label{eq:lambda2}
       \lim_{\varepsilon \rightarrow 0}\prod_{k=0}^\infty \prod_{n \in
       \Lambda^k_2}  \P_{k,n}(\varepsilon) = 2^{-|\cup_k \Lambda_2^k|}.
   \end{align}

5. Finally we calculate $\lim_{\varepsilon \rightarrow 0} \prod_{k=0}^\infty
\prod_{n \in \Lambda^k_1}  \P_{k,n}(\varepsilon)$. For given $k,n$ define
   \begin{align*}
      \bar{F}_{k,n} = \left\{\begin{array}{ll} F_{k,n} - \delta, &F_{k,n} > \delta, \\ F_{k,n} + \delta, &
      F_{k,n} < - \delta. \end{array} \right.
   \end{align*}
   We know that $Z_{k,n}$ is standard normal, so that by Proposition
   \ref{prop:ldp standard normal} for $n \in \Lambda_1^k$
   \begin{align*}
      \lim_{\varepsilon \rightarrow 0} \varepsilon \log \P_{k,n}^0(\varepsilon)
      = - \frac{\bar{F}_{k,n}^2}{2c_n^2(\alpha) \lambda_k},
   \end{align*}
   and therefore again by the finiteness of $\cup_k \Lambda_1^k$
   \begin{align} \label{eq:lambda1}
      \lim_{\varepsilon \rightarrow 0} \varepsilon  \log \prod_{k=0}^\infty \prod_{n \in \Lambda_1^k}
      \P_{k,n}^0(\varepsilon) = - \sum_{k=0}^\infty \sum_{n \in \Lambda_1^k}
      \frac{\bar{F}_{k,n}^2}{2c_n^2(\alpha) \lambda_k}.
   \end{align}

6. Combining (\ref{eq:lambda3}) - (\ref{eq:lambda1}) we obtain
   \begin{align*}
     \lim_{\varepsilon \rightarrow 0} \varepsilon \log \mu_\varepsilon (U^\delta_\alpha(F)) =  -
     \sum_{k=0}^\infty \frac{1}{\lambda_k} \sum_{n \in \Lambda_1^k} \frac{\bar{F}_{k,n}^2}{2c_n^2(\alpha)}.
   \end{align*}
   So if we manage to show
   \begin{align*}
      - \sum_{k=0}^\infty \frac{1}{\lambda_k} \sum_{n \in \Lambda_1^k}
      \frac{\bar{F}_{k,n}^2}{2c_n^2(\alpha)} = -\inf_{G \in U^\delta_\alpha(F)} I(G),
   \end{align*}
   the proof is complete. By Ciesielski's isomorphism, every $G \in C_\alpha^0([0,1];H)$ has the
   representation
   \begin{align*}
      G = \sum_{k=0}^\infty e_k \sum_{n=0}^\infty \frac{G_{k,n}}{c_n(\alpha)} \phi_n.
   \end{align*}
Its derivative fulfills (if it exists) for any $k\ge 0$
   \begin{align*}
      \langle  \dot G, e_k \rangle = \sum_{n=0}^\infty \frac{G_{k,n}}{c_n(\alpha)} \chi_n.
   \end{align*}
  Since the Haar functions $(\chi_n)_{n\ge 0}$ are a CONS for $L^2([0,1])$, we see that $\tilde{I}(\langle  G,
  e_k \rangle) < \infty$ if and only if $(G_{k,n}/ c_n(\alpha)) \in l_2$, and in this case
   \begin{align*}
      \tilde{I} (\langle  G, e_k \rangle) = \frac{1}{2} \int_0^1 \langle  \dot G(s), e_k \rangle^2 ds =
      \sum_{n=0}^\infty \frac{G_{k,n}^2}{2c_n^2(\alpha)}.
   \end{align*}
   So we finally obtain with Lemma \ref{lem:gleichheit rate functions} the desired equality
   \begin{align*}
     \inf_{G \in U^\delta_\alpha(F)} I(G) &= \inf_{G \in U^\delta_\alpha(F)}
      \sum_{k=0}^\infty \frac{1}{\lambda_k} \tilde{I}(\langle  G, e_k \rangle) =
      \inf_{G \in U^\delta_\alpha(F)} \sum_{k=0}^\infty \frac{1}{\lambda_k}  \sum_{n=0}^\infty
      \frac{G_{k,n}^2}{2c_n^2(\alpha)} \\
      & = \sum_{k=0}^\infty \frac{1}{\lambda_k} \sum_{n \in \Lambda_1^k}
      \frac{\bar{F}_{k,n}^2}{2c_n^2(\alpha)}.
   \end{align*}
\end{proof}

\subsection{Exponential tightness}

The final ingredient needed in the proof of the LDP for Hilbert space valued Wiener processes is exponential tightness.
It will be established in two steps. The first step claims exponential tightness for the family of laws of $\sqrt{\varepsilon}Z, \varepsilon>0,$ where $Z$ is an $H$-valued $\mathcal{N}(0,Q)$-variable.
\begin{lem}\label{lem: exponential tight elementary}
Let $\varepsilon>0$ and $\nu_\varepsilon=\P\circ (\sqrt{\varepsilon}Z)^{-1}$ for a centered Gaussian random variable $Z$ with values in the separable Hilbert space $H$ and covariance operator $Q$. Then $(\nu_\varepsilon)_{\varepsilon\in (0,1]}$ is exponentially tight. More precisely for every $a>0$ there exists a compact subset $K_a$ of $H$, such that for every $\varepsilon \in (0,1]$
\begin{align*}
   \nu_\varepsilon(K_a^c) \le e^{-a/\varepsilon}
\end{align*}

\end{lem}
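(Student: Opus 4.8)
The plan is to exploit the structure of a centered Gaussian variable on a separable Hilbert space: with respect to the eigenbasis $(e_k)$ of $Q$, we may write $Z = \sum_{k\ge 0}\sqrt{\lambda_k} N_k e_k$ where $(N_k)$ are i.i.d.\ standard normals, and $\sum_k \lambda_k < \infty$. Fix $a > 0$. The idea is to build a compact set $K_a$ as a countable product of centered intervals in each coordinate direction, shrinking fast enough in the tail that Hilbert–Schmidt–type compactness (i.e.\ the Rellich criterion: a subset of $H$ is relatively compact if it is bounded and its tails in the $e_k$-directions are uniformly small) holds, but large enough that the complement has exponentially small measure uniformly in $\varepsilon \in (0,1]$.

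Concretely, I would set $K_a := \{ x \in H : |\langle x, e_k\rangle| \le r_k \text{ for all } k \ge 0\}$ for a sequence $(r_k)$ of positive reals with $\sum_k r_k^2 / \lambda_k < \infty$ chosen below; such a set is closed, bounded (since $\sum r_k^2 < \infty$), and has uniformly small tails, hence compact by Rellich's criterion. Then by independence of the coordinates and the elementary Gaussian tail bound $\P(|N| > t) \le e^{-t^2/2}$ for $t \ge 1$,
\begin{align*}
   \nu_\varepsilon(K_a^c) = \P\Big( \sqrt{\varepsilon} \sum_{k} \sqrt{\lambda_k} N_k e_k \notin K_a \Big)
      \le \sum_{k\ge 0} \P\big( \sqrt{\varepsilon \lambda_k}\, |N_k| > r_k \big)
      \le \sum_{k\ge 0} \exp\Big( - \frac{r_k^2}{2\varepsilon \lambda_k} \Big),
\end{align*}
valid once $r_k^2/(\varepsilon\lambda_k) \ge 1$, which for $\varepsilon \le 1$ holds as soon as $r_k^2 \ge \lambda_k$. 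Since $\varepsilon \le 1$ we have $\exp(-r_k^2/(2\varepsilon\lambda_k)) \le \exp(-r_k^2/(2\lambda_k))$, so the whole bound is dominated by the $\varepsilon$-independent quantity $\sum_k \exp(-r_k^2/(2\lambda_k))$. To get the clean estimate $\nu_\varepsilon(K_a^c) \le e^{-a/\varepsilon}$ I would instead keep one factor of $\varepsilon$ in the exponent: choosing $r_k^2 = \lambda_k\,(2a + 2\beta_k)$ with $\beta_k \ge 0$ such that $\sum_k e^{-\beta_k} \le 1$ (e.g.\ $\beta_k = (k+1)\log 2 + \log 2$, or any summable-exponential choice), one gets $r_k^2/(2\varepsilon\lambda_k) = (a+\beta_k)/\varepsilon \ge a/\varepsilon + \beta_k$ for $\varepsilon \le 1$, whence
\begin{align*}
   \nu_\varepsilon(K_a^c) \le \sum_{k\ge 0} e^{-a/\varepsilon - \beta_k} = e^{-a/\varepsilon}\sum_{k\ge 0} e^{-\beta_k} \le e^{-a/\varepsilon}.
\end{align*}
Finally one checks $\sum_k r_k^2/\lambda_k = \sum_k 2(a+\beta_k) < \infty$ fails — so in fact I must be slightly more careful and instead impose $r_k^2 = \lambda_k\cdot 2(a + \beta_k)$ only requires $\sum_k r_k^2 = \sum_k 2\lambda_k(a+\beta_k) < \infty$, which does hold since $\lambda_k \to 0$ fast enough once $(\beta_k)$ grows only logarithmically and $(\lambda_k)$ is summable; boundedness of $K_a$ and smallness of Hilbert-space tails $\sum_{k \ge n} r_k^2 \to 0$ then give compactness.

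The main obstacle is the tension in choosing $(r_k)$: the intervals must be wide enough (roughly $r_k \gtrsim \sqrt{\lambda_k \log(1/\text{target})}$ per coordinate) that the union bound over infinitely many coordinates still sums to something $\le e^{-a/\varepsilon}$ uniformly in $\varepsilon \le 1$, yet the squared widths $r_k^2$ must remain summable (for $K_a \subset H$ to be bounded) with vanishing tails (for compactness). The trace-class hypothesis $\sum_k \lambda_k < \infty$ is exactly what reconciles these demands: it lets $r_k^2$ be a fixed multiple of $\lambda_k$ up to a mild logarithmic correction, keeping $\sum r_k^2$ finite while the exponents $r_k^2/(2\lambda_k)$ stay bounded below by a summable sequence. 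I would present the argument by first fixing the sequence $(\beta_k)$ with $\sum e^{-\beta_k}\le 1$, then defining $r_k$, then verifying compactness of $K_a$ via Rellich, and finally running the union bound above; all the calculations are routine once the sequences are pinned down.
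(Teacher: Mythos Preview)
Your coordinate-wise hypercube construction has a genuine gap at the compactness step. With $r_k^2 = 2\lambda_k(a+\beta_k)$ you need both $\sum_k e^{-\beta_k} \le 1$ (for the union bound to give $e^{-a/\varepsilon}$) and $\sum_k \lambda_k \beta_k < \infty$ (so that $\sum_k r_k^2 < \infty$ and $K_a$ is compact). For a general trace-class $Q$ these two requirements are incompatible. Take $\lambda_k = 1/(k(\log k)^2)$ for $k \ge 2$, which is summable. Minimising $\beta \mapsto e^{-\beta}+\lambda_k\beta$ gives the value $\lambda_k(1+\log(1/\lambda_k))$, so for \emph{every} sequence $(\beta_k)$ one has
\[
   \sum_k e^{-\beta_k} + \sum_k \lambda_k\beta_k \ \ge\ \sum_k \lambda_k\bigl(1+\log(1/\lambda_k)\bigr) \ \sim\ \sum_k \frac{1}{k\log k} \ =\ \infty,
\]
and hence at least one of the two sums diverges. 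Your assertion that summability of $(\lambda_k)$ together with logarithmic growth of $(\beta_k)$ makes $\sum_k r_k^2<\infty$ ``hold'' is therefore false in general; the hypercube argument only goes through under the strictly stronger hypothesis $\sum_k\lambda_k\log(1/\lambda_k)<\infty$.

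The paper sidesteps this obstruction by using an ellipsoid together with a single Fernique-type estimate rather than a hypercube with a union bound. One chooses $c_k \to \infty$ so slowly that $\sum_k c_k\lambda_k < \infty$ (such a sequence always exists for summable $(\lambda_k)$), takes $K_a$ to be the closure of the image of a ball under the compact diagonal operator $x\mapsto\sum_k c_k^{-1/2}\langle x,e_k\rangle e_k$, and observes that $\sqrt{\varepsilon}Z\notin K_a$ forces $\|\tilde Z\|_H^2 > \mathrm{const}/\varepsilon$, where $\tilde Z$ is centered Gaussian with the still trace-class covariance having eigenvalues $c_k\lambda_k$. Exponential integrability of $\|\tilde Z\|_H^2$ then yields the uniform bound in one stroke. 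The ellipsoid lets a large excursion in one coordinate be compensated by small values in the others, which is precisely what the coordinate-by-coordinate union bound cannot exploit when the eigenvalues are only barely summable.
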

\begin{proof}
We know that for a sequence $(b_k)_{k\ge 0}$ converging to 0, the operator $T_{(b_k)}:=\sum_{k=0}^\infty b_k \langle \cdot, e_k\rangle e_k$ is compact.  That is, for bounded sets $A\subset H$ the set $T_{(b_k)}(A)$ is precompact in $H$. Since $H$ is complete, this means that $cl(T_{(b_k)}(A))$ is compact. Let $a' > 0$ to be specified later. Denote by $B(0,\sqrt{a'})\subset H$ the ball of radius $\sqrt{a'}$ in $H$. We will show that there exists a zero sequence $(b_k)_{k\ge 0}$, such that the compact set $K_{a'}=cl(T_{(b_k)}(B(0,\sqrt{a'})))$ satisfies for all $\varepsilon \in (0,1]$
\begin{align}\label{eq: estimate for compact(a')}
   \P(\sqrt{\varepsilon}Z\in (K_{a'})^c)\leq c e^{-a'/\varepsilon}.
\end{align}
with a constant $c>0$ that does not depend on $a'$. Thus for given $a$, we can choose $a'>a$ such that for every $\varepsilon \in (0,1]$
\begin{align*}
   c \le e^{(a'-a)/\varepsilon}
\end{align*}
and therefore the proof is complete once we proved \eqref{eq: estimate for compact(a')}.

Since $Z$ is Gaussian, $e^{\lambda \lVert Z\rVert_H}$ is integrable for small $\lambda$, and we can apply Markov's inequality to obtain constants $\lambda(Q),c(Q) > 0$ such that $\P(\lVert Z\rVert_H\ge \sqrt{a'})\le c(Q)e^{-\lambda(Q) a'}$.

Note that if $(\lambda_k)_{k\ge 0}\in l^1$, we can always find a sequence $(c_k)_{k\ge 0}$ such that $\lim_{k\to\infty}c_k = \infty$ and $\sum_{k\ge 0} c_k\lambda_k<\infty$. For $\beta>0$ that will be specified later, we set $b_k=\sqrt{\frac{\beta}{c_k}}$ for all $k\ge 0$. We can define $(T_{(b_k)})^{-1}=\sum_{k=0}^\infty \frac{1}{b_k}\langle \cdot, e_k\rangle e_k$. This gives
\begin{align*}
 \P(\sqrt{\varepsilon}Z\in (K_{a'})^c)&\le\P(\sqrt{\varepsilon}(T_{(b_k)})^{-1}(Z)\notin B(0,\sqrt{a'}))\\
&=\P(\lVert (T_{(b_k)})^{-1}(Z)\rVert_H^2 \ge \frac{a'}{\varepsilon})\\
&=\P\left(\sum_{k=0}^\infty c_k |\langle Z,e_k\rangle|^2\ge  \frac{\beta a'}{\varepsilon}\right)\\
&=\P\left(\lVert \tilde{Z}\rVert_H\ge \sqrt{\frac{\beta a'}{\varepsilon}}\right),
\end{align*}
where $\tilde{Z}$ is a centered Gaussian random variable with trace class covariance operator
\begin{align*}
   \tilde{Q}=\sum_{k=0}^\infty c_k\lambda_k\langle \cdot, e_k\rangle e_k.
\end{align*}
Consequently we obtain
\begin{align*}
   \P(\sqrt{\varepsilon}Z\in (K_{a'})^c)\le c(\tilde{Q})e^{-\frac{\lambda(\tilde{Q})\beta a'}{\varepsilon}}
\end{align*}
Choosing $\beta=\frac{1}{\lambda(\tilde{Q})}$ proves the claim \eqref{eq: estimate for compact(a')}.
\end{proof}

With the help of Lemma \ref{lem: exponential tight elementary} we are now in a position to prove exponential tightness for the family $(\mu_\varepsilon)_{\varepsilon \in (0,1]}$.
\begin{lem}\label{lem:exponential tightness}
   $(\mu_\varepsilon)_{\varepsilon \in (0,1]}$ is an exponentially tight family of probability measures on $(C_\alpha^0([0,1];H)$, $\lVert.\rVert_\alpha)$.
\end{lem}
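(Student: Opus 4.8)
The plan is to transfer the exponential tightness from sequence space to the function space $C_\alpha^0([0,1];H)$ using Ciesielski's isomorphism and the Schauder representation $W = \sum_{n\ge 0}\phi_n Z_n$ established in Proposition \ref{lem:representation of Wienerprocess}, where $(Z_n)_{n\ge 0}$ are i.i.d. $\mathcal N(0,Q)$. Since $T^H_\alpha$ is a homeomorphism (Theorem \ref{CiesilskiH}) between $(C_\alpha^0([0,1];H),\lVert\cdot\rVert_\alpha)$ and $(\mathcal C_0^H,\lVert\cdot\rVert_\infty)$, it suffices to construct, for every $a>0$, a compact subset $\mathcal K_a\subset\mathcal C_0^H$ with $\P\big(T^H_\alpha(\sqrt\varepsilon W)\notin\mathcal K_a\big)\le e^{-a/\varepsilon}$ for all $\varepsilon\in(0,1]$, and then take $K_a=(T^H_\alpha)^{-1}(\mathcal K_a)$, which is compact in the $\lVert\cdot\rVert_\alpha$-topology. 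Recall from the discussion after Proposition \ref{lem:representation of Wienerprocess} that $T^H_\alpha(\sqrt\varepsilon W)=(\sqrt\varepsilon\, c_n(\alpha)Z_n)_{n\ge 0}$, so the problem reduces to controlling a sequence whose $n$-th term is $\sqrt\varepsilon\,c_n(\alpha)Z_n$ with $c_n(\alpha)$ decaying exponentially in $n$.

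First I would recall the characterization of compact subsets of $\mathcal C_0^H$ (equivalently $c_0(H)$): a set $\mathcal K\subset\mathcal C_0^H$ is relatively compact if and only if there is a sequence $(\rho_n)_{n\ge 0}$ of radii with $\rho_n\to 0$ such that every $\eta\in\mathcal K$ satisfies $\lVert\eta_n\rVert_H\le\rho_n$ for all $n$, together with the requirement that for each fixed $n$ the $n$-th coordinates $\{\eta_n:\eta\in\mathcal K\}$ lie in a fixed compact subset of $H$ — but since for each fixed $n$ we can intersect with a compact ball image $T_{(b_k^{(n)})}(B(0,\rho_n))$ exactly as in Lemma \ref{lem: exponential tight elementary}, this second condition is easily arranged. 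So I would define $\mathcal K_a$ as the set of sequences $(\eta_n)$ with $\eta_n$ lying in an appropriately chosen compact ``shrinking ellipsoid'' $\mathcal E_n$ in $H$, where the size of $\mathcal E_n$ is tuned using the exponential decay of $c_n(\alpha)$.

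Next I would carry out the probability estimate. Using a union bound over $n$,
\begin{align*}
 \P\big(T^H_\alpha(\sqrt\varepsilon W)\notin\mathcal K_a\big)\le\sum_{n=0}^\infty\P\big(\sqrt\varepsilon\,c_n(\alpha)Z_n\notin\mathcal E_n\big).
\end{align*}
Applying Lemma \ref{lem: exponential tight elementary} to each $Z_n$ with the right choice of compact set and radius — concretely, choosing $\mathcal E_n=(T_{(b_k)})\big(B(0,\sqrt{s_n})\big)$ for a radius $s_n$ growing like (constant)$\cdot a\cdot c_n(\alpha)^{-1}$ (say $s_n = a\, 2^{n}$, which still lets $\mathcal E_n$ shrink to $\{0\}$ since $\sqrt{s_n}\,c_n(\alpha)\to 0$ by the exponential decay of $c_n$) — the lemma gives a bound of the form $\P(\sqrt\varepsilon\,c_n(\alpha)Z_n\notin\mathcal E_n)\le c\,\exp(-s_n/(c_n(\alpha)^2\varepsilon))\le c\,\exp\big(-(a/\varepsilon)\,\gamma^n\big)$ for some $\gamma>1$. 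The geometric-in-$n$ gain in the exponent makes the sum over $n$ dominated by its first term, so $\sum_n\P(\cdots)\le C\,e^{-a/\varepsilon}$; absorbing the constant by enlarging $a$ slightly (as at the end of Lemma \ref{lem: exponential tight elementary}) yields $\mu_\varepsilon(K_a^c)\le e^{-a/\varepsilon}$, in particular $\limsup_{\varepsilon\to 0}\varepsilon\log\mu_\varepsilon(K_a^c)\le -a$.

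The main obstacle is verifying that the set $\mathcal K_a$ I construct is genuinely compact in $\mathcal C_0^H$ — not merely bounded coordinate-wise — which requires combining the uniform ``shrinking radius'' condition $\sup_{\eta\in\mathcal K_a}\lVert\eta_n\rVert_H\to 0$ with a compactness condition in each coordinate of $H$, and then confirming that $(T^H_\alpha)^{-1}(\mathcal K_a)$ is compact in the \emph{strong} $\lVert\cdot\rVert_\alpha$-topology (this is where continuity of $(T^H_\alpha)^{-1}$ from Theorem \ref{CiesilskiH} is used). Everything else — the union bound and the invocation of Lemma \ref{lem: exponential tight elementary} — is routine; the only quantitative point to watch is that the radii $s_n$ must grow slowly enough that $\sqrt{s_n}\,c_n(\alpha)\to 0$ (so $\mathcal E_n\downarrow\{0\}$ and $\mathcal K_a$ is compact) yet fast enough that $s_n/(c_n(\alpha)^2)\ge a\,\gamma^n$ with $\gamma>1$ (so the series converges with the uniform bound $e^{-a/\varepsilon}$); the exponential decay of $c_n(\alpha)=2^{n(\alpha-1/2)+\alpha-1}$ leaves ample room for both.
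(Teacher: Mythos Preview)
Your plan is essentially the paper's: transfer to $\mathcal{C}_0^H$ via $T^H_\alpha$, build a product $\prod_n \mathcal{E}_n$ of coordinate-wise compact sets of shrinking diameter, apply a union bound over $n$, and control each factor with Lemma~\ref{lem: exponential tight elementary}. The paper takes the $n$-th factor to be $c_n(\alpha)\bigl(\overline{B}(0,\sqrt{a(n+1)/\lambda})\cap K^a_n\bigr)$ with $K^a_n$ the compact set furnished by Lemma~\ref{lem: exponential tight elementary} at level $(n+1)a$, so that both the ellipsoid and the ball contributions sum as a geometric series in $e^{-a/\varepsilon}$.

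There is, however, a concrete slip in your quantitative bookkeeping. You write $c_n(\alpha)=2^{n(\alpha-1/2)+\alpha-1}$ and rely on its ``exponential decay'', but in Theorem~\ref{CiesilskiH} the exponent contains the dyadic level $k$ (with $n=2^k+l$), not $n$; hence $c_n(\alpha)\asymp n^{\alpha-1/2}$ decays only \emph{polynomially}. With your choice $s_n=a\cdot 2^n$ one gets $\sqrt{s_n}\,c_n(\alpha)\asymp 2^{n/2}n^{\alpha-1/2}\to\infty$, so the $\mathcal{E}_n$ do not shrink and $\prod_n\mathcal{E}_n$ is not even a subset of $\mathcal{C}_0^H$, let alone compact there. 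The trade-off you identify at the end is the right one, but with the correct polynomial decay the radii may only grow like $o(n^{1/2-\alpha})$. A workable choice is to intersect the Lemma~\ref{lem: exponential tight elementary} compact at level $(n+1)a$ with a ball of radius $r_n$ where $r_n^2=\lambda^{-1}\bigl(a+2\log(n+2)\bigr)$: then $c_n(\alpha)r_n\asymp n^{\alpha-1/2}\sqrt{\log n}\to 0$ secures compactness, while the two tail terms $e^{-(n+1)a/\varepsilon}$ and $c\,e^{-\lambda r_n^2/\varepsilon}=c\,e^{-a/\varepsilon}(n+2)^{-2/\varepsilon}\le c\,e^{-a/\varepsilon}(n+2)^{-2}$ sum to a constant times $e^{-a/\varepsilon}$. (Incidentally, the paper's stated radius $\sqrt{a(n+1)/\lambda}$ suffers from the same oversight, since $c_n(\alpha)\sqrt{n+1}\asymp n^{\alpha}\to\infty$; the logarithmic choice above repairs both arguments.)
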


\begin{proof}
Let $a > 0$. We will construct a suitable set of the form
\begin{align*}
   \tilde{K}^a = \prod_{n=0}^\infty K^a_n
\end{align*}
such that
\begin{align*}
    \limsup_{\varepsilon \rightarrow 0}\varepsilon \log\mu_{\varepsilon}\left[\left(\left(T^H_\alpha\right)^{-1}\tilde{K}^a\right)^c\right]\leq -a.
\end{align*}
Here each $K^a_n$ is a compact subset of $H$, such that the diameter of $K^a_n$ tends to 0 as $n$ tends to $\infty$. Then $\tilde{K}^a$ will be sequentially compact in $\mathcal{C}^H_0$ by a diagonal sequence argument. Since $\mathcal{C}^H_0$ is a metric space, $\tilde{K}^a$ will be compact. As we saw in Theorem \ref{CiesilskiH}, $(T^H_{\alpha})^{-1}$ is continuous, so that then $K^a:=(T^H_{\alpha})^{-1}(\tilde{K}^a)$ is compact in $(C^0_{\alpha}([0,1],H), \lVert \cdot \rVert_{\alpha})$.

Let $\nu_\varepsilon=\P\circ (\sqrt{\varepsilon}Z)^{-1}$ for a random variable $Z$ on $H$ with $Z\sim \mathcal{N}(0,Q)$. By Lemma \ref{lem: exponential tight elementary}, we can find a sequence of compact sets $(K^a_n)_{n\in\mathbb{N}}\subset H$ such that for all $\varepsilon \in (0,1]$:
\begin{align*}
 \nu_\varepsilon((K^a_n)^c)\leq \exp\left(\frac{-(n+1)a}{\varepsilon}\right).
\end{align*}

To guarantee that the diameter of the $K^a_n$ converges to zero, denoting by $\overline{B}(0,d)$ the closed ball of radius $d$ around $0$, we set
\begin{align*}
   \tilde{K^a}:=\prod_{n=0}^{\infty}c_n(\alpha)\left(\overline{B}\left(0,\sqrt{\frac{a(n+1)}{\lambda}}\right)\cap K^a_n\right).
\end{align*}
Since $c_n(\alpha)\sqrt{a(n+1)/\lambda}\rightarrow0$ as $n\rightarrow \infty$, this is a compact set in $\mathcal{C}^H_0$. Thus $K^a:=(T^H_{\alpha})^{-1}(\tilde{K^a})$ is compact in $(C^0_{\alpha}([0,1],H), \lVert \cdot \rVert_{\alpha})$.

  Remember that by Lemma \ref{lem:representation of Wienerprocess} we have $W=\sum_{n=0}^{\infty}\phi_nZ_n$, where $(Z_n)_{n\ge 0}$ is an i.i.d. sequence of $\mathcal{N}(0,Q)-$ variables. This implies $T_\alpha^H(W)=(c_n(\alpha)Z_n)_{n\ge 0}$ and thus for any $\varepsilon \in (0,1]$
\begin{align*}
  \mu_{\varepsilon}((K^a)^c)&=\P\left[\cup_{n\in\mathbb{N}_0}\left\{c_n(\alpha)\sqrt{\varepsilon}Z_n\in \left(c_n(\alpha)\left(B\left(0,\sqrt{\frac{a(n+1)}{\lambda}}\right)\cap K^a_n\right)\right)^c\right\}\right]\\
  &\leq \sum_{n=0}^{\infty}\left(\nu_\varepsilon((K^a_n)^c)+\P\left(\lVert Z_n\rVert\geq \sqrt{\frac{a(n+1)}{\varepsilon\lambda}}\right)\right)\\
  &\leq \sum_{n=0}^{\infty}\left(e^{\frac{-(n+1)a}{\varepsilon}}+ce^{\frac{-a(n+1)}{\varepsilon}}\right)\\
  &=(1+c)\frac{e^{\frac{-a}{\varepsilon}}}{1-e^{\frac{-a}{\varepsilon}}}.
\end{align*}
  So we have
\begin{align*}
 \limsup_{\varepsilon \rightarrow 0}\varepsilon \log\mu_{\varepsilon}((K^a)^c)\leq -a,
\end{align*}
\end{proof}

We now combine the arguments given so far to obtain an LDP in the H\"older spaces.
\begin{lem}
   $(\mu_\varepsilon)_{\varepsilon \in (0,1]}$ satisfies an LDP on $(C_\alpha^0([0,1];H), \lVert.\rVert_\alpha)$ with good rate function $I$.
\end{lem}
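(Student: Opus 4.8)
The plan is simply to assemble the pieces already established and feed them into the two abstract transfer results. Concretely: Lemma~\ref{lem: ldp for basis} identifies $\lim_{\varepsilon\to 0}\varepsilon\log\mu_\varepsilon(U^\delta_\alpha(F))$ with $-\inf_{G\in U^\delta_\alpha(F)}I(G)$ on the $\|\cdot\|'_\alpha$-balls, Lemma~\ref{lem: rate function} says $I$ is a rate function for the coarse topology, and Lemma~\ref{lem:exponential tightness} gives exponential tightness. Proposition~\ref{prop:exponential und ldp fuer offene baelle, dann ldp} upgrades this to a full LDP in the coarse topology, and Proposition~\ref{prop:ldp von grober auf feine topo} then pushes the LDP forward to the finer norm topology.

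First I would work on $(C_\alpha^0([0,1];H),\tau')$, where $\tau'$ is the (Hausdorff, since $T^H_\alpha$ is injective into the normed space $\mathcal{C}^H_0$) topology generated by $\|\cdot\|'_\alpha$. The open balls $U^\delta_\alpha(F)$, $F\in C_\alpha^0([0,1];H)$, $\delta>0$, form a basis of $\tau'$, hence a collection $\mathcal{G}_0$ of the type required in Proposition~\ref{prop:exponential und ldp fuer offene baelle, dann ldp}. Since $\|\cdot\|'_\alpha\le\|\cdot\|_\alpha$, every $\|\cdot\|_\alpha$-compact set is $\tau'$-compact, so the exponential tightness of $(\mu_\varepsilon)$ from Lemma~\ref{lem:exponential tightness} carries over to $(C_\alpha^0([0,1];H),\tau')$; note also that each $U^\delta_\alpha(F)$ is in fact $\|\cdot\|_\alpha$-open, so all the probabilities occurring are of genuine Borel sets of the norm topology, on which $W$ is measurable as the a.s.\ $\|\cdot\|_\alpha$-limit of its Schauder partial sums (Proposition~\ref{lem:representation of Wienerprocess}). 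With the hypothesis of Proposition~\ref{prop:exponential und ldp fuer offene baelle, dann ldp} verified on all of $\mathcal{G}_0$ by Lemma~\ref{lem: ldp for basis} and $I$ a rate function by Lemma~\ref{lem: rate function}, that proposition yields that $I$ is a good rate function on $(C_\alpha^0([0,1];H),\tau')$ and that $(\mu_\varepsilon)_{\varepsilon\in(0,1]}$ satisfies the LDP there.

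Second, I would invoke Proposition~\ref{prop:ldp von grober auf feine topo} with $\tau_1=\tau'$ and $\tau_2$ the norm topology generated by $\|\cdot\|_\alpha$: the family $(\mu_\varepsilon)$ is exponentially tight on $(C_\alpha^0([0,1];H),\|\cdot\|_\alpha)$ by Lemma~\ref{lem:exponential tightness}, satisfies the LDP with rate function $I$ with respect to the coarser Hausdorff topology $\tau'$ by the previous step, and $\tau'\subset\tau_2$. The proposition then gives the LDP on $(C_\alpha^0([0,1];H),\|\cdot\|_\alpha)$ with good rate function $I$, which is exactly the assertion.

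There is essentially no remaining obstacle: the genuinely substantial work, namely the explicit evaluation of $\lim_{\varepsilon\to 0}\varepsilon\log\mu_\varepsilon(U^\delta_\alpha(F))$ together with the matching variational identity for $\inf_{G\in U^\delta_\alpha(F)}I(G)$, and the construction of the compact product sets $\tilde K^a$ in $\mathcal{C}^H_0$, was already carried out in Lemmas~\ref{lem: ldp for basis} and~\ref{lem:exponential tightness}. The only things needing a line of care here are the bookkeeping points flagged above — that $\tau'$ is Hausdorff and coarser than the norm topology, and that the $\|\cdot\|'_\alpha$-balls are norm-open so that measurability is unproblematic — all of which are immediate from the properties of $T^H_\alpha$ recorded in Theorem~\ref{CiesilskiH}.
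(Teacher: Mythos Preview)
Your proposal is correct and follows essentially the same route as the paper's own proof: transfer exponential tightness from the $\lVert\cdot\rVert_\alpha$-topology to the coarser $\lVert\cdot\rVert'_\alpha$-topology, apply Proposition~\ref{prop:exponential und ldp fuer offene baelle, dann ldp} with $\mathcal{G}_0$ the $\lVert\cdot\rVert'_\alpha$-balls (using Lemmas~\ref{lem: rate function} and~\ref{lem: ldp for basis}), and then lift back to the finer topology via Proposition~\ref{prop:ldp von grober auf feine topo} and Lemma~\ref{lem:exponential tightness}. Your additional remarks on Hausdorffness of $\tau'$ and Borel measurability of the balls are small bookkeeping points the paper leaves implicit; they do not change the argument.
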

\begin{proof}
   We know $\lVert.\rVert_\alpha^{'} \le \lVert.\rVert_\alpha$. Therefore the $\lVert.\rVert_\alpha^{'}$-topology is coarser, which in turn implies that every compact set in the $\lVert.\rVert_\alpha$-topology is also a compact set in the $\lVert.\rVert_\alpha^{'}$-topology. From Lemma \ref{lem:exponential tightness} we thus obtain that $(\mu_\varepsilon)_{\varepsilon \in (0,1]}$ is also exponentially tight on $(C_\alpha^0([0,1];H), \lVert.\rVert_\alpha^{0})$.

   Proposition \ref{prop:exponential und ldp fuer offene baelle, dann ldp} implies that $(\mu_\varepsilon)_{\varepsilon \in (0,1]}$ satisfies an LDP with good rate function $I$ on $(C_\alpha^0([0,1];H)$, $\lVert.\rVert_\alpha^{0})$.

   Finally we obtain from Proposition \ref{prop:ldp von grober auf feine topo} and from Lemma \ref{lem:exponential tightness} that $(\mu_\varepsilon)_{\varepsilon \in (0,1]}$ satisfies an LDP with good rate function $I$ on $(C_\alpha^0([0,1];H), \lVert.\rVert_\alpha)$.
\end{proof}

We may now extend the LDP from $(C_\alpha^0([0,1];H), \lVert.\rVert_\alpha)$ to $(C_\alpha([0,1];H), \lVert.\rVert_\alpha)$. This is an immediate consequence of the contraction principle (Proposition \ref{prop:contraction principle}), since the inclusion map from $C_\alpha^0([0,1];H)$ to $C_\alpha([0,1];H)$ is continuous.
Similarly we can transfer the LDP from $C^0_\alpha([0,1];H)$ to $C([0,1]; H)$, the space of continuous functions on $[0,1]$ with values in $H$, equipped with the uniform norm.

\begin{thm}
   Let $(W(t): t \in [0,1])$ be a $Q$-Wiener process and let for $\varepsilon \in (0,1]$ $\mu_\varepsilon$ be the law of $\sqrt{\varepsilon} W$. Then $(\mu_\varepsilon)_{\varepsilon \in (0,1]}$ satisfies an LDP on $(C([0,1];H),\lVert.\rVert_\infty)$ with rate function $I$.
\end{thm}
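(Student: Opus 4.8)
The plan is to obtain the final theorem as a routine consequence of the Hölder-space LDP already established (the Lemma stating that $(\mu_\varepsilon)_{\varepsilon\in(0,1]}$ satisfies an LDP on $(C_\alpha^0([0,1];H),\lVert\cdot\rVert_\alpha)$ with good rate function $I$), together with the contraction principle (Proposition \ref{prop:contraction principle}). The key observation is that for any fixed $\alpha\in(0,1/2)$, the $Q$-Wiener process $W$ almost surely has trajectories in $C_\alpha^0([0,1];H)$ by Proposition \ref{lem:representation of Wienerprocess}, so $\mu_\varepsilon$ may legitimately be regarded as a family of probability measures on $C_\alpha^0([0,1];H)$, and the question is only how to push this LDP forward along the natural inclusion into $(C([0,1];H),\lVert\cdot\rVert_\infty)$.

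First I would fix $\alpha\in(0,1/2)$ and invoke the preceding Lemma: $(\mu_\varepsilon)_{\varepsilon\in(0,1]}$ satisfies an LDP on $(C_\alpha^0([0,1];H),\lVert\cdot\rVert_\alpha)$ with good rate function $I$. Next I would consider the inclusion map
\begin{align*}
   \iota: (C_\alpha^0([0,1];H),\lVert\cdot\rVert_\alpha) \longrightarrow (C([0,1];H),\lVert\cdot\rVert_\infty),
\end{align*}
which is continuous because $\lVert F\rVert_\infty = \sup_{t}\lVert F(t)-F(0)\rVert_H \le \lVert F\rVert_\alpha$ for $F\in C_\alpha^0([0,1];H)$ (using $F(0)=0$). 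Since $I$ is a good rate function on the Hölder space and $\iota$ is injective, the contraction principle (Proposition \ref{prop:contraction principle}) applies and yields that $(\mu_\varepsilon\circ\iota^{-1})_{\varepsilon\in(0,1]} = (\mu_\varepsilon)_{\varepsilon\in(0,1]}$ satisfies an LDP on $(C([0,1];H),\lVert\cdot\rVert_\infty)$ with good rate function $I'(F) = \inf\{I(G): \iota(G)=F\}$. Because $\iota$ is injective, this infimum is over at most one element, so $I'(F) = I(F)$ whenever $F\in C_\alpha^0([0,1];H)$ and $I'(F)=+\infty$ otherwise. It remains only to check that this coincides with the function $I$ as defined on $C([0,1];H)$ in the text, i.e. that $I(F)<\infty$ forces $F\in C_\alpha^0([0,1];H)$: indeed, if $I(F)<\infty$ then $F(\cdot)=\int_0^\cdot U(s)\,ds$ with $U\in L^2([0,1];H_0)$, hence $F$ is $1/2$-Hölder and $F(0)=0$, so in particular $F\in C_\alpha^0([0,1];H)$ for every $\alpha<1/2$. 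Thus $I'$ and $I$ agree as functions on $C([0,1];H)$.

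The only point requiring a little care — and the main (minor) obstacle — is the measurability/identification of $\mu_\varepsilon$ as a Borel measure on the Hölder space versus on $C([0,1];H)$, and the matching of the two versions of the rate function; both are handled by the computation above and by the already-established fact (Lemma \ref{lem:gleichheit rate functions} and the finite-energy characterisation) that finite rate functions force Hölder regularity. Everything else is a direct application of the contraction principle. I would therefore write the proof as: fix $\alpha\in(0,1/2)$; recall the Hölder LDP; note continuity and injectivity of the inclusion $\iota$; apply Proposition \ref{prop:contraction principle}; and identify $I'=I$ using the finite-energy characterisation of $I$, concluding that $(\mu_\varepsilon)_{\varepsilon\in(0,1]}$ satisfies an LDP on $(C([0,1];H),\lVert\cdot\rVert_\infty)$ with rate function $I$.
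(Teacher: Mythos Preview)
Your proof is correct and follows essentially the same route as the paper: both arguments pass from the H\"older LDP to the uniform-norm LDP via the contraction principle applied to the (continuous, injective) inclusion. The paper splits this into two steps---first weakening the topology on $C_\alpha^0([0,1];H)$ from $\lVert\cdot\rVert_\alpha$ to $\lVert\cdot\rVert_\infty$, then including into $C([0,1];H)$---whereas you do it in one; your explicit verification that $I(F)<\infty$ forces $F\in C_\alpha^0([0,1];H)$ (so that $I'=I$) is a point the paper leaves implicit but which is indeed needed.
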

\begin{proof}
   First we can transfer the LDP from $(C^0_\alpha([0,1];H), \lVert.\rVert_\alpha)$ to $(C^0_\alpha([0,1];H)$, $\lVert.\rVert_\infty)$. This is because on $C^0_\alpha([0,1];H)$, $\lVert.\lVert_\infty \le \lVert.\rVert_\alpha$, whence the $\lVert.\rVert_\infty$-topology is coarser. Therefore $I$ is a good rate function for the $\lVert.\rVert_\infty$-topology as well, and $(\mu_\varepsilon)_{\varepsilon \in (0,1]}$ satisfies an LDP on $(C^0_\alpha([0,1];H)$, $\lVert.\rVert_\infty)$ with good rate function $I$.

   The inclusion map from $(C^0_\alpha([0,1];H)$, $\lVert.\rVert_\infty)$ to $(C([0,1];H)$, $\lVert.\rVert_\infty)$ is continuous, so that an application of the contraction principle (Proposition \ref{prop:contraction principle}) finishes the proof.
\end{proof}

\vspace{20pt}

\noindent \textbf{Acknowledgement:} Nicolas Perkowski is supported by a Ph.D. scholarship of the Berlin Mathematical School.

\begin{footnotesize}
\bibliography{literatur}

\begin{thebibliography}{11}
\providecommand{\natexlab}[1]{#1}
\providecommand{\url}[1]{\texttt{#1}}
\expandafter\ifx\csname urlstyle\endcsname\relax
  \providecommand{\doi}[1]{doi: #1}\else
  \providecommand{\doi}{doi: \begingroup \urlstyle{rm}\Url}\fi

\bibitem[Baldi and Roynette(1992)]{Baldi}
P.~Baldi and B.~Roynette.
\newblock Some exact equivalents for the {B}rownian motion in {H}{\"o}lder
  norm.
\newblock \emph{Probab. Theory Relat. Fields}, 93:\penalty0 457--484, 1992.

\bibitem[{Ben Arous} and Gradinaru(1994)]{BenarousGradinaru}
G.~{Ben Arous} and M.~Gradinaru.
\newblock {H\"older norma and the support theorem for diffusions.}
\newblock \emph{Ann. Inst. H. Poincar\'e}, 30:\penalty0 415--436, 1994.

\bibitem[{Ben Arous} and Ledoux(1994)]{BenarousLedoux}
G.~{Ben Arous} and M.~Ledoux.
\newblock {Grandes d\'eviations de Freidlin-Wentzell en norme H\"olderienne.}
\newblock \emph{S\'eminaire de Probabilit\'es}, 28:\penalty0 293--299, 1994.

\bibitem[Ciesielski(1960)]{Ciesielski}
Z.~Ciesielski.
\newblock On the isomorphisms of the spaces ${H_\alpha}$ and $m$.
\newblock \emph{Bull. Acad. Pol. Sci.}, 8:\penalty0 217--222, 1960.

\bibitem[{Da Prato} and Zabczyk(1992)]{DaPrato}
G.~{Da Prato} and J.~Zabczyk.
\newblock \emph{Stochastic equations in infinite dimensions}.
\newblock Cambridge University Press, 1992.

\bibitem[Dembo and Zeitouni(1998)]{Dembo}
A.~Dembo and O.~Zeitouni.
\newblock \emph{Large deviation techniques and applications}.
\newblock Springer-Verlag, 1998.

\bibitem[Eddahbi and Ouknine(1997)]{EddahbiOuknine}
M.~Eddahbi and Y.~Ouknine.
\newblock {Large deviations of diffusions on Besov-Orlicz spaces.}
\newblock \emph{Bull. Sci. Math.}, 121:\penalty0 573--584, 1997.

\bibitem[Eddahbi et~al.(1999)Eddahbi, N'zi, and Ouknine]{Eddahbi}
M.~Eddahbi, M.~N'zi, and Y.~Ouknine.
\newblock {Grandes d\'eviations des diffusions sue les espaces de Besov-Orlicz
  et application.}
\newblock \emph{Stochastics and Stochastics Reports}, 65:\penalty0 299--315,
  1999.

\bibitem[Freidlin and Wentzell(1998)]{Freidlin}
M.~Freidlin and A.~Wentzell.
\newblock \emph{Random perturbations of dynamical systems}.
\newblock Springer-Verlag, New York, second edition, 1998.

\bibitem[Galves et~al.(1987)Galves, Olivieri, and Vares]{Vares}
A.~Galves, E.~Olivieri, and M.~Vares.
\newblock {Metastability for a class of dynamical systems subject to small
  random perturbations.}
\newblock \emph{Ann. Probab.}, 15:\penalty0 1288--1305, 1987.

\bibitem[Schilder(1966)]{Schilder}
M.~Schilder.
\newblock {Asymptotic formulas for Wiener integrals}.
\newblock \emph{Trans. Amer. Math. Soc.}, 125:\penalty0 63--85, 1966.

\end{thebibliography}
\bibliographystyle{plainnat}
\end{footnotesize}

\end{document}